\newtheorem{theorem}{Theorem}
\newtheorem{prop}{Proposition}
\newtheorem{lemma}{Lemma}
\newtheorem{rem}{Remark}
\newtheorem{exmp}{Example}
\newtheorem{cor}{Corollary}
\newtheorem{fact}{Fact}
\begin{document}
\author{Mark Pankov}
\title{Isometric embeddings of Johnson graphs in Grassmann graphs}
\address{Department of Mathematics and Informatics, University of Warmia and Mazury,
{\. Z}olnierska 14A, 10-561 Olsztyn, Poland}
\email{pankov@matman.uwm.edu.pl}

\maketitle

\begin{abstract}
Let $V$ be an $n$-dimensional vector space ($4\le n <\infty$) and
let ${\mathcal G}_{k}(V)$ be the Grassmannian formed by all $k$-dimensional subspaces of $V$.
The corresponding Grassmann graph will be denoted by $\Gamma_{k}(V)$.
We describe all isometric embeddings of Johnson graphs $J(l,m)$, $1<m<l-1$
in $\Gamma_{k}(V)$, $1<k<n-1$ (Theorem 4).
As a consequence, we get the following:
the image of every isometric embedding
of $J(n,k)$ in $\Gamma_{k}(V)$ is an apartment of ${\mathcal G}_{k}(V)$
if and only if $n=2k$.
Our second result (Theorem 5) is a classification of rigid isometric embeddings of Johnson graphs
in $\Gamma_{k}(V)$, $1<k<n-1$.
\end{abstract}

\section{Introduction}
A {\it building} \cite{Tits} is a simplicial complex $\Delta$
together with a family of subcomplexes called {\it apartments}
and satisfying a certain collection of axioms.
Maximal simplices of $\Delta$ are called {\it chambers}.
They have the same cardinal number $n$ (the rank of $\Delta$)
and we say that two chambers are {\it adjacent} if their intersection consists of $n-1$ vertices.
Denote by ${\rm Ch}(\Delta)$ the set of all chambers.
Consider the graph $\Gamma_{\rm ch}(\Delta)$
whose vertex set is ${\rm Ch}(\Delta)$ and whose edges are pairs of adjacent chambers.
Let ${\mathcal A}$ be the intersection of ${\rm Ch}(\Delta)$ with an apartment
and let $\Gamma({\mathcal A})$ be the restriction of the graph $\Gamma_{\rm ch}(\Delta)$ to ${\mathcal A}$.
There is the following characterization of the intersections of ${\rm Ch}(\Delta)$ with apartments 
(see, for example, \cite{Brown} p. 90).

\begin{theorem}
A subset of ${\rm Ch}(\Delta)$ is the intersection of ${\rm Ch}(\Delta)$
with an apartment if and only if it is
the image of an isometric embedding of $\Gamma({\mathcal A})$ in $\Gamma_{\rm ch}(\Delta)$.
\end{theorem}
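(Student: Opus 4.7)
The plan is to verify the two implications separately, using standard building machinery.

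For $(\Rightarrow)$, I would invoke the building axiom that every apartment $\Sigma$ is a Coxeter complex whose gallery distance realizes the word metric of the Weyl group $W$, and that the inclusion $\Sigma \hookrightarrow \Delta$ preserves gallery distances. Consequently, if $\mathcal{A} = {\rm Ch}(\Delta) \cap \Sigma$, then the inclusion $\mathcal{A} \hookrightarrow {\rm Ch}(\Delta)$ is itself an isometric embedding of $\Gamma(\mathcal{A})$ into $\Gamma_{\rm ch}(\Delta)$.

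For $(\Leftarrow)$, assume $f \colon \Gamma(\mathcal{A}) \to \Gamma_{\rm ch}(\Delta)$ is an isometric embedding, where $\mathcal{A} = {\rm Ch}(\Delta) \cap \Sigma$. Fix $C_{0} \in \mathcal{A}$, set $D_{0} := f(C_{0})$, and choose an apartment $\Sigma_{0}$ of $\Delta$ containing $D_{0}$. The key tool is the retraction $\rho = \rho_{\Sigma_{0},D_{0}} \colon \Delta \to \Sigma_{0}$, which is distance-nonincreasing on $\Delta$ and restricts to an isomorphism on each apartment through $D_{0}$. Since $f$ is an isometry, every minimal gallery in $\Sigma$ starting at $C_{0}$ is carried by $f$ to a minimal gallery in $\Delta$ starting at $D_{0}$ of the same combinatorial type; composing with $\rho$ then yields a type-preserving, distance-preserving chamber-system morphism $\Sigma \to \Sigma_{0}$, hence an isomorphism of Coxeter complexes.

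It remains to show that $f(\mathcal{A})$ coincides with ${\rm Ch}(\Delta) \cap \Sigma_{1}$ for a single apartment $\Sigma_{1}$ of $\Delta$. I would first observe that $\rho$ restricts to a bijection on $f(\mathcal{A})$, since a coincidence $\rho(f(C)) = \rho(f(C'))$ with $C \neq C'$ would contradict the isometry of $f$ via the triangle inequality along a minimal gallery through $C_{0}$. Then $\Sigma_{1}$ is constructed by patching apartments along minimal galleries between pairs of chambers of $f(\mathcal{A})$; more concretely, one takes a pair of chambers in $f(\mathcal{A})$ realizing the diameter of $\Sigma$ and invokes the building-theoretic fact that such a pair lies in a uniquely determined apartment, which one then verifies coincides with the span of $f(\mathcal{A})$. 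The main obstacle is exactly this final step: upgrading from the local statement that $\rho$ identifies $f(\mathcal{A})$ with ${\rm Ch}(\Delta)\cap\Sigma_{0}$ as an abstract Coxeter complex to the global statement that $f(\mathcal{A})$ itself \emph{is} the chamber set of one single apartment of $\Delta$. Coherently extending the local apartment structure across the whole image requires either an explicit convexity argument built from minimal galleries or use of the $W$-valued distance on the building, and this is where the isometry hypothesis must be used to its full strength.
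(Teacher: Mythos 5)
First, note that the paper does not prove this statement at all: Theorem~1 is quoted as a known result with a pointer to Brown's \emph{Buildings} (p.~90), so there is no in-paper argument to compare yours against; your proposal has to stand on its own. Your forward direction is fine: it is exactly the convexity of apartments (any minimal gallery between two chambers of an apartment stays in that apartment, so gallery distance in $\Sigma$ agrees with gallery distance in $\Delta$), and the inclusion is therefore an isometric embedding.

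The converse, however, has a genuine gap --- one you yourself flag as ``the main obstacle,'' which is precisely the substantive content of the theorem. Two intermediate claims are also not justified as stated. (i) You assert that $\rho\circ f$ is distance-preserving; but the retraction $\rho_{\Sigma_0,D_0}$ is only guaranteed to preserve distances \emph{from} $D_0$ and to be distance-nonincreasing in general, so for a pair $C,C'$ with neither equal to $C_0$ you only get $d(\rho f(C),\rho f(C'))\le d(C,C')$, and the claimed injectivity/isometry of $\rho\circ f$ does not follow from ``the triangle inequality along a minimal gallery through $C_0$'' without a real argument. (ii) You claim $f$ carries minimal galleries to minimal galleries ``of the same combinatorial type''; an isometric embedding of chamber graphs is not assumed type-preserving, so this needs proof (and is in fact not needed --- the image may realize a diagram automorphism). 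The standard way to close the gap is not by ``patching apartments along minimal galleries'' but by working with the $W$-valued distance $\delta$: one shows that a gallery-distance isometry from the chamber set of a Coxeter complex induces a $\delta$-isometry (after twisting by an automorphism of $(W,S)$), invokes the extension theorem for isometries into buildings to see that the image is a thin, convex chamber subcomplex isomorphic to $\Sigma(W,S)$, and then uses the characterization of apartments as such subcomplexes. Without that machinery (or an equivalent convexity argument carried out in full), your sketch establishes the easy direction and the setup for the hard one, but not the theorem.
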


The vertex set of a building can be labeled by the nodes of the associated diagram
(the labeling is unique up to a permutation on the set of nodes).
The set of vertices corresponding to the same node is called a {\it Grassmannian}
(more general Grassmannians defined by parts of the diagram were considered in \cite{Pasini}).

Let $\Delta$ be a building and let ${\mathcal G}$ be one of its Grassmannians.
We say that two distinct elements $a,b\in {\mathcal G}$ are {\it adjacent} if
there exists a simplex $P\in \Delta$ such that $P\cup\{a\}$ and $P\cup\{b\}$ both are
chambers;
in this case, the set of all $c\in {\mathcal G}$ such that $P\cup\{c\}$ is a chamber
is said to be the {\it line} joining $a$ and $b$.
The Grassmannian  ${\mathcal G}$ together with the set of all such lines is a partial linear space
(each point belongs to a line, each line contains at least two points,
for any two distinct points there is at most one line containing them).
This partial linear space will be called the {\it Grassmann space} corresponding to ${\mathcal G}$.
The associated collinearity graph is the {\it Grassmann graph} $\Gamma_{\mathcal G}$
whose vertex set is ${\mathcal G}$ and whose edges are pairs of adjacent elements.
The intersections of ${\mathcal G}$ with apartments of $\Delta$ are said to be
{\it apartments} of the Grassmannian ${\mathcal G}$.

In \cite{CS,CKS,Kasikova1,Kasikova2} apartments of some Grassmannians
were characterized in terms of the associated Grassmann spaces.
It is natural to ask whether there are metric characterizations of apartments of Grassmannians similar to Theorem 1?

Every building of type $\textsf{A}_{n-1}$ ($n\ge 4$) is
the flag complex of a certain  $n$-dimensional vector space $V$.
The Grassmannians of this building are the usual Grassmannians
${\mathcal G}_{k}(V)$, $k\in\{1,\dots,n-1\}$, where ${\mathcal G}_{k}(V)$
is formed by all $k$-dimensional subspaces of $V$.
The corresponding Grassmann graph is denoted by $\Gamma_{k}(V)$.
The restriction of $\Gamma_{k}(V)$ to every apartment of ${\mathcal G}_{k}(V)$
is isomorphic to the Johnson graph $J(n,k)$.
Recall that the vertex set of $J(n,k)$ is formed by all $k$-element subsets of an $n$-element set;
two such subsets are adjacent (joined by an edge) if their intersection consists of $k-1$ elements.
%It was mentioned in \cite{CS} that
%$\Gamma_{k}(V)$ contains subgraphs isomorphic to $J(n,k)$
%whose vertex sets are not apartments of ${\mathcal G}_{k}(V)$.

Every apartment of ${\mathcal G}_{k}(V)$ is the image of an isometric embedding of $J(n,k)$ in $\Gamma_{k}(V)$.
We show that
the image of every isometric embedding of $J(n,k)$ in $\Gamma_{k}(V)$ is an apartment of ${\mathcal G}_{k}(V)$
if and only if $n=2k$.
This statement follows from our classification
of isometric embeddings of Johnson graphs $J(l,m)$, $1<m<l-1$ in the Grassmann graph $\Gamma_{k}(V)$,
$1<k<n-1$ (Theorem 4).

If $2k\le n$ then any finite $(2k)$-independent subset ${\mathcal X}\subset {\mathcal G}_{1}(V)$
(every $(2k)$-element subset of ${\mathcal X}$ is independent)
defines an isometric embedding of $J(n',k)$, $n'=|{\mathcal X}|$ in $\Gamma_{k}(V)$.
The image consists of all $S\in {\mathcal G}_{k}(V)$ such that $S$ is the sum of $k$ elements from
${\mathcal X}$.
All other isometric embeddings of Johnson graphs in $\Gamma_{k}(V)$ are some modifications of this construction.

Our second result (Theorem 5) is a classification of rigid isometric embeddings of Johnson graphs in
$\Gamma_{k}(V)$.
The term {\it rigid} means that
every automorphism of the restriction of $\Gamma_{k}(V)$ to the image can be extended to an automorphism of $\Gamma_{k}(V)$.
In the case when $n\ne 2k$, apartments of ${\mathcal G}_{k}(V)$ satisfy this condition;
but there exist other rigid isometric embeddings of $J(n,k)$ in $\Gamma_{k}(V)$.

\section{Grassmannians of type $\textsf{A}_{n-1}$}

Let $V$ be an $n$-dimensional left vector space over a division ring $R$ and let $4\le n<\infty$.
Two elements of ${\mathcal G}_{k}(V)$ are adjacent if their intersection
is $(k-1)$-dimensional; this is equivalent to the fact that
their sum is $(k+1)$-dimensional.

Let $M$ and $N$ be subspaces of $V$ such that  $M\subset N$ and
$$\dim M <k<\dim N.$$
Denote by $[M,N]_{k}$ the set of all $S\in {\mathcal G}_{k}(V)$
satisfying $M\subset S\subset N$.
If $M=0$ or $N=V$ then this set will be denoted by
$\langle N]_{k}$ or $[M\rangle_{k}$, respectively.
If
$$\dim M=k-1\;\mbox{ and }\;\dim N=k+1$$
then $[M,N]_{k}$ is a line of the Grassmann space corresponding to ${\mathcal G}_{k}(V)$;
we denote this Grassmann space by ${\mathfrak G}_{k}(V)$.
In the case when $k=1,n-1$, this is the projective space $\Pi_{V}$ associated with $V$
or the dual projective space $\Pi^{*}_{V}$, respectively.

If $k=1,n-1$ then any two distinct vertices of the Grassmann graph $\Gamma_{k}(V)$ are adjacent.
In the case when $1<k<n-1$, there are precisely the following two types of maximal cliques of $\Gamma_{k}(V)$
(see, for example, Section 3.1 \cite{Pankov}):
\begin{enumerate}
\item[$\bullet$] the stars $[M\rangle_{k}$, $M\in {\mathcal G}_{k-1}(V)$;
\item[$\bullet$] the tops  $\langle N]_{k}$, $N\in {\mathcal G}_{k+1}(V)$.
\end{enumerate}
The set of all maximal cliques of $\Gamma_{k}(V)$
coincides with the set of all maximal singular subspaces of ${\mathfrak G}_{k}(V)$
(a subspace of a partial linear space is called singular if any two distinct points of this subspace
are joined by a line), see Section 3.1 \cite{Pankov}.

Denote by $d_{k}$ the distance in $\Gamma_{k}(V)$.
For all $S,U\in {\mathcal G}_{k}(V)$ we have
$$d_{k}(S,U)=k-\dim(S\cap U)=\dim(S+U)-k.$$
If ${\mathcal X}\subset {\mathcal G}_{k}(V)$
then we write $\Gamma({\mathcal X})$ for the restriction of the Grassmann graph $\Gamma_{k}(V)$
to ${\mathcal X}$.

Every apartment of ${\mathcal G}_{k}(V)$ is defined by a base of $V$;
it consists of all elements of ${\mathcal G}_{k}(V)$ spanned by subsets of this base.
Let $B$ be a base of $V$ and ${\mathcal A}_{k}\subset {\mathcal G}_{k}(V)$
be the associated apartment.
Clearly, ${\mathcal A}_{1}$ and ${\mathcal A}_{n-1}$ are bases of the projective spaces $\Pi_{V}$
and $\Pi^{*}_{V}$, respectively.
Suppose that $1<k<n-1$.
It was noted above that $\Gamma({\mathcal A}_{k})$ is isomorphic to $J(n,k)$.
The maximal cliques of $\Gamma({\mathcal A}_{k})$ are of the following two
types:
\begin{enumerate}
\item[$\bullet$] the stars ${\mathcal A}_{k}\cap [M\rangle_{k}$, $M\in {\mathcal A}_{k-1}$,
\item[$\bullet$] the tops  ${\mathcal A}_{k}\cap \langle N]_{k}$, $N\in {\mathcal A}_{k+1}$.
\end{enumerate}
Every maximal clique of $\Gamma({\mathcal A}_{k})$ is an independent
subset of ${\mathfrak G}_{k}(V)$ spanning a maximal singular
subspace (a subset $X$ of a partial linear space is independent if
the subspace spanned by $X$ is not spanned by a proper subset of $X$).

For every subspace $S\subset V$ the {\it annihilator} $S^{0}$ is
the subspace formed by all $x^{*}\in V^{*}$ that vanish on $S$.
If ${\mathcal X}\subset {\mathcal G}_{k}(V)$ then we write ${\mathcal X}^{0}$
for the set formed by the annihilators of all $S\in{\mathcal X}$.
The {\it annihilator mapping} of ${\mathcal G}_{k}(V)$ to ${\mathcal G}_{n-k}(V^{*})$
is the bijection transferring every $S\in {\mathcal G}_{k}(V)$ to the annihilator $S^{0}$.
The following statement is well-known.

\begin{fact}\label{fact1}
The annihilator mapping of
${\mathcal G}_{k}(V)$ to ${\mathcal G}_{n-k}(V^{*})$
is an isomorphism of $\Gamma_{k}(V)$ to $\Gamma_{n-k}(V^{*})$
sending apartments to apartments, stars to tops and tops to stars.
\end{fact}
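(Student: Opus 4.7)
The plan is to reduce every claim to the four standard identities of annihilator calculus: the dimension formula $\dim S + \dim S^{0}=n$, the inclusion-reversing property $S\subset T \Leftrightarrow T^{0}\subset S^{0}$, and the two interchange laws $(S\cap U)^{0}=S^{0}+U^{0}$ and $(S+U)^{0}=S^{0}\cap U^{0}$. Since $V$ is finite-dimensional, the canonical identification $V\cong V^{**}$ gives $S^{00}=S$, so $S\mapsto S^{0}$ is a bijection between subspaces of $V$ and subspaces of $V^{*}$; combined with the dimension formula, this at once yields a bijection ${\mathcal G}_{k}(V)\to {\mathcal G}_{n-k}(V^{*})$.

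Next I would verify that the map is a graph isomorphism. Adjacency of $S,U\in {\mathcal G}_{k}(V)$ means $\dim(S\cap U)=k-1$, and the interchange law $(S\cap U)^{0}=S^{0}+U^{0}$ together with the dimension formula gives $\dim(S^{0}+U^{0})=n-k+1$, which is precisely the adjacency of $S^{0}$ and $U^{0}$ in $\Gamma_{n-k}(V^{*})$. The argument is reversible (use $V^{**}\cong V$), so adjacency is preserved in both directions.

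For stars and tops I would use the inclusion-reversing property: $S\in [M\rangle_{k}$ is equivalent to $M\subset S$, hence to $S^{0}\subset M^{0}$; since $\dim M^{0}=n-k+1$, the image of the star $[M\rangle_{k}$ is exactly the top $\langle M^{0}]_{n-k}$ in ${\mathcal G}_{n-k}(V^{*})$. The converse direction, tops going to stars, is symmetric. For apartments I would choose a base $B=\{e_{1},\dots,e_{n}\}$ of $V$ with dual base $B^{*}=\{e_{1}^{*},\dots,e_{n}^{*}\}$ of $V^{*}$; for any $I\subset\{1,\dots,n\}$ of size $k$ the subspace $\langle e_{i}:i\in I\rangle$ has annihilator $\langle e_{j}^{*}:j\notin I\rangle$, so the apartment associated with $B$ is mapped bijectively onto the apartment associated with $B^{*}$. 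Since every base of $V^{*}$ is the dual of some base of $V$, every apartment of ${\mathcal G}_{n-k}(V^{*})$ is in the image.

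There is no genuine obstacle here: the fact is entirely a bookkeeping exercise on top of well-known duality identities. The only place where care is required is in the apartment step, where one must use the dual base rather than an arbitrary base of $V^{*}$, and in keeping the two interchange laws straight so that the sum/intersection swap delivers both the correct adjacency condition and the correct star/top exchange.
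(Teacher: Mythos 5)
Your proof is correct: the bijection, the adjacency computation via $(S\cap U)^{0}=S^{0}+U^{0}$, the star/top exchange via inclusion reversal, and the dual-base argument for apartments are all the standard duality bookkeeping, and each step checks out (including over a division ring, where $V^{*}$ is a left $R^{*}$-space). The paper itself offers no proof --- it simply declares the Fact well-known --- so your write-up is exactly the verification the author is implicitly relying on, and there is nothing further to compare.
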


Let $M$ and $N$  be subspaces of $V$ such that $M\subset N$ and
$$\dim M=m <k<l=\dim N.$$
Then $[M,N]_{k}$ is a subspace of ${\mathfrak G}_{k}(V)$;
subspaces of such type are called {\it parabolic} \cite{CKS}.
Let $B$ be a base of $V$ such that $M$ and $N$ are spanned by subsets of $B$.
The intersection of the associated apartment of ${\mathcal G}_{k}(V)$ with
$[M,N]_{k}$ is said to be an {\it apartment of the parabolic subspace} $[M,N]_{k}$.
The natural collineation (isomorphism) of $[M,N]_{k}$ to the Grassmann space ${\mathfrak G}_{k-m}(N/M)$
establishes a one-to-one correspondence between apartments.
The restrictions of $\Gamma_{k}(V)$ to apartments of $[M,N]_{k}$
are isomorphic to $J(l-m,k-m)$.
It is clear that the annihilator mapping ${\mathcal G}_{k}(V)$ to ${\mathcal G}_{n-k}(V^{*})$
transfers $[M,N]_{k}$ to $[N^{0}, M^{0}]_{n-k}$; moreover, it establishes a one-to-one
correspondence between apartments of these parabolic subspaces.

It was noted above that maximal cliques of
the restriction of $\Gamma_{k}(V)$ to an apartment of ${\mathcal G}_{k}(V)$
are independent subsets of ${\mathfrak G}_{k}(V)$ (the same holds for apartments of
parabolic subspaces). This property characterizes apartments.

\begin{theorem}[B. N. Cooperstein, A. Kasikova, E.E. Shult \cite{CKS}]
Let $1<k<n-1$ and $l,m$ be natural numbers satisfying $k<l\le n$ and $0\le m<k$.
Let also ${\mathcal X}$ be a subset of ${\mathcal G}_{k}(V)$
such that $\Gamma({\mathcal X})$ is isomorphic to the Johnson graph $J(l-m,k-m)$ and
every maximal clique of $\Gamma({\mathcal X})$ is an independent subset of
${\mathfrak G}_{k}(V)$.
Then ${\mathcal X}$ is an apartment in a parabolic subspace of ${\mathfrak G}_{k}(V)$;
in the case when $k=m$ and $n=l$, this is an apartment of ${\mathcal G}_{k}(V)$.
\end{theorem}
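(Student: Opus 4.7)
The plan is to show that the hypotheses force $\mathcal X$ to lie inside some parabolic subspace $[M^*,N^*]_k$ with $\dim M^*=m$ and $\dim N^*=l$, and that inside this parabolic subspace $\mathcal X$ is an apartment. The first step is to describe the maximal cliques of $\Gamma(\mathcal X)$. Every clique of $\Gamma_k(V)$ is contained in a star $[M\rangle_k$ or a top $\langle N]_k$, since these are the only maximal singular subspaces of $\mathfrak G_k(V)$; so each maximal clique $C$ of $\Gamma(\mathcal X)$ embeds into such a star or top. The independence hypothesis prevents $C$ from being a collinear or degenerate subset of that ambient singular subspace: its linear span in $\mathfrak G_k(V)$ must use all $|C|$ directions.

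Next I would use the isomorphism $\varphi\colon J(l-m,k-m)\to\Gamma(\mathcal X)$ to partition the maximal cliques of $\Gamma(\mathcal X)$ into \emph{star-type} and \emph{top-type} classes matching those of $J$. In the generic range $1<k-m<l-m-1$, $J$ has two clique families distinguishable by their sizes and by their intersection pattern; the degenerate cases $k-m=1$ and $l-k=1$ collapse to a single family and are handled directly. Two stars (resp.\ two tops) of $\Gamma_k(V)$ meet in at most one vertex, which together with the analogous combinatorics in $J$ pins down the matching. Consequently, for every edge $\{S,S'\}$ of $\Gamma(\mathcal X)$ the star-clique through it is $[S\cap S'\rangle_k\cap\mathcal X$ and the top-clique through it is $\langle S+S']_k\cap\mathcal X$, and every vertex $S\in\mathcal X$ carries a canonical pair $(M_S,N_S)$ with $\dim M_S=k-1$, $\dim N_S=k+1$, and $M_S\subset S\subset N_S$.

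The decisive step, and the main obstacle, is a global coherence argument: one must show that the family $\{M_S\}$ intersects in a subspace $M^*$ of dimension exactly $m$ and that the family $\{N_S\}$ sums to a subspace $N^*$ of dimension exactly $l$, so that $\mathcal X\subset[M^*,N^*]_k$. On the $J$-side, stars of $J(l-m,k-m)$ are indexed by $(k-m-1)$-subsets of an $(l-m)$-set, and two stars that meet do so because their indexing subsets share a $(k-m-2)$-subset; translating this cascade of overlaps through $\varphi$, and applying a dimension count to the overlapping $(k-1)$-dim subspaces $M_S$, should force them to descend to a common $m$-dimensional $M^*$. The annihilator mapping of Fact~1 dualizes the argument and delivers $N^*$. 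Once $\mathcal X\subset[M^*,N^*]_k$ is known, the natural collineation $[M^*,N^*]_k\cong\mathfrak G_{k-m}(N^*/M^*)$ reduces the claim to the base case $m=0$, $l=n$. In that case the $M_S$ are $(k-1)$-dimensional, and their pairwise intersections inside $V$ yield $n$ one-dimensional subspaces whose direct sum is $V$; choosing a nonzero vector from each produces a base of $V$ whose associated apartment is $\mathcal X$.
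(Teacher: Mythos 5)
First, a point of comparison: the paper does not prove this statement. It is Theorem~2, imported from \cite{CKS} with only a citation, so there is no in-paper argument to measure yours against; your proposal has to stand on its own. Its overall shape is reasonable (local analysis of maximal cliques, the star/top dichotomy, a global coherence step, reduction to the quotient $N^{*}/M^{*}$), but it has a genuine gap exactly where you yourself place the weight of the proof.

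The decisive step --- that the $(k-1)$-dimensional subspaces attached to the star-cliques all contain a common $M^{*}$ with $\dim M^{*}=m$, and dually that the $(k+1)$-dimensional subspaces of the top-cliques sum to an $N^{*}$ with $\dim N^{*}=l$ --- is asserted with ``should force \dots\ to descend to a common $M^{*}$'' and never argued. This is not a routine verification. You only assume $\Gamma({\mathcal X})\cong J(l-m,k-m)$ as graphs, with no isometry: for two vertices at Johnson distance $d\ge 2$ the induced-subgraph structure gives only $\dim(S\cap U)\le k-2$, and nothing in the adjacency data forces $\dim(S\cap U)=k-d$. The possibility $\dim(S\cap U)>k-d$ (excess overlap, hence a too-small global span) is precisely the degeneracy your dimension count must exclude, and the independence hypothesis on maximal cliques --- the only tool available for this --- is never invoked after your first paragraph. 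Two further points need repair. The endgame is wrong as stated: for $m=0$, $l=n$ the subspaces $M_{S}$ are $(k-1)$-dimensional, so their pairwise intersections are generically $(k-2)$-dimensional, not lines; reaching a base of $V$ requires iterating the descent $k-1$ times (compare the chain $f_{m},f_{m-1},\dots,f_{1}$ in the paper's proof of Theorem~4, which is the isometric analogue of what you need). And ``every vertex $S$ carries a canonical pair $(M_{S},N_{S})$'' is not meaningful, since a vertex lies in $k-m$ star-cliques and $l-k$ top-cliques; moreover, when $l=2k-m$ the stars and tops of $J(l-m,k-m)$ have equal size, so the matching of clique types cannot be ``pinned down by sizes'' and must instead come from the two-point-intersection and connectedness argument that the paper uses in Step~1 of its proof of Theorem~4.
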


\begin{rem}\rm{
Also, in \cite{CKS} such kind of characterizations
were established for apartments of dual polar spaces and half-spin Grassmannians.
More general results can be found in \cite{Kasikova1,Kasikova2}.
}\end{rem}

Let $V'$ be a left vector space over a division ring $R'$.
A mapping $u:V\to V'$ is called {\it semi-linear}
if it is additive, i.e.
$$u(x+y)=u(x)+u(y)$$
for all $x,y\in V$,
and there exists a homomorphism $\sigma:R\to R'$ such that
$$u(ax)=\sigma(a)u(x)$$ for all $x\in V$ and $a\in R$.
A semi-linear bijection $u:V\to V'$ is called a {\it semi-linear isomorphism} if
the associated homomorphism of $R$ to $R'$ is an isomorphism.
Every semi-linear automorphism of $V$ induces an automorphism of the flag complex $\Delta(V)$
whose restriction to each ${\mathcal G}_{k}(V)$ is an automorphism of $\Gamma_{k}(V)$.

Let $u$ be a semi-linear isomorphism of $V$ to $V^{*}$.
The mapping transferring every subspace $S\subset V$ to the annihilator of $u(S)$
is an automorphism of $\Delta(V)$.
The restriction of this automorphism to ${\mathcal G}_{k}(V)$ is an isomorphism of
$\Gamma_{k}(V)$ to $\Gamma_{n-k}(V)$.
In the case when $n=2k$, this is an automorphism of $\Gamma_{k}(V)$.

Recall that $V^{*}$ is a left vector space over the opposite division ring $R^{*}$
(the division rings $R$ and $R^{*}$ have the same set of elements and the same additive operation,
the multiplicative operation $*$ on $R^{*}$ is defined by $a*b:=b\cdot a$ if $\cdot$ is the multiplicative operation on $R$).
Semi-linear isomorphisms of $V$ to $V^{*}$ exist only in the case when
$R$ and $R^{*}$ are isomorphic.
Note that $R=R^{*}$ if $R$ is commutative.

By the Fundamental Theorem of Projective Geometry \cite{Baer}, every automorphism of the complex $\Delta(V)$
is induced by a semi-linear automorphism of $V$ or a semi-linear isomorphism of $V$ to $V^{*}$.
Two semi-linear mappings define the same automorphism of $\Delta(V)$ if and only if one of them
is a scalar multiple of the other.
Denote by ${\rm PGL}(V)$ the group of all automorphisms of $\Delta(V)$  induced by {\it linear} automorphisms of $V$.
This group coincides with the group of all automorphisms of $\Delta(V)$ induced by semi-linear automorphisms of $V$
only in the case when every automorphism of $R$ is inner
(for example, if $R={\mathbb R}$ or $R$ is the division ring of real quaternion numbers).

%Classical Chow's theorem can be formulated in the following form.

\begin{theorem}[W.L. Chow \cite{Chow}]
Every automorphism of $\Gamma_{k}(V)$, $1<k<n-1$
has a unique extension to an automorphism of $\Delta(V)$,
in other words, it is induced by a semi-linear automorphism of $V$
or a semi-linear isomorphism of $V$ to $V^{*}$. The second possibility can be realized
only in the case when $n=2k$.
\end{theorem}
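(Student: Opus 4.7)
The plan is to show that every graph automorphism $f$ of $\Gamma_{k}(V)$ extends uniquely to an automorphism of the flag complex $\Delta(V)$, and then invoke the Fundamental Theorem of Projective Geometry recalled in the excerpt.

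The first task is to describe what $f$ does to maximal cliques. By the classification already quoted, these are the stars $[M\rangle_{k}$ with $M\in\mathcal G_{k-1}(V)$ and the tops $\langle N]_{k}$ with $N\in\mathcal G_{k+1}(V)$, and $f$ permutes this family. Two distinct stars (respectively, two distinct tops) share at most one vertex of $\Gamma_{k}(V)$, while a star $[M\rangle_{k}$ and a top $\langle N]_{k}$ with $M\subset N$ share the whole line $[M,N]_{k}$, which is a projective line over $R$ and thus contains at least three vertices. Consequently, the relation \emph{share at least two vertices} on maximal cliques is precisely the bipartite incidence graph of $\mathcal G_{k-1}(V)$ and $\mathcal G_{k+1}(V)$ under $\subset$, and since this bipartite graph is connected, $f$ either preserves both types globally or swaps them globally.

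In the type-preserving case, $f$ induces bijections $\phi_{k-1}\colon\mathcal G_{k-1}(V)\to\mathcal G_{k-1}(V)$ and $\phi_{k+1}\colon\mathcal G_{k+1}(V)\to\mathcal G_{k+1}(V)$ by $f([M\rangle_{k})=[\phi_{k-1}(M)\rangle_{k}$ and $f(\langle N]_{k})=\langle\phi_{k+1}(N)]_{k}$. Because $[M\rangle_{k}\cap\langle N]_{k}\ne\emptyset$ iff $M\subset N$, the bijections $\phi_{k-1},\phi_{k+1}$ are compatible with inclusion, and each is itself a graph automorphism of the corresponding Grassmann graph $\Gamma_{k\pm1}(V)$. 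Iterating this construction produces inclusion-preserving bijections on every $\mathcal G_{j}(V)$, assembling into an automorphism $\tilde f$ of $\Delta(V)$; the Fundamental Theorem of Projective Geometry then yields a semi-linear automorphism of $V$ inducing $\tilde f$ and hence $f$. Uniqueness of the extension is automatic: two extensions agreeing on $\mathcal G_{k}(V)$ must agree on each $\mathcal G_{k\pm1}(V)$ via the star/top correspondence, and by the same descent on every $\mathcal G_{j}(V)$.

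In the type-swapping case, $f$ gives an inclusion-reversing bijection $\mathcal G_{k-1}(V)\to\mathcal G_{k+1}(V)$ together with its inverse, and the analogous extension argument lifts this to an anti-automorphism of $\Delta(V)$. By the Fundamental Theorem of Projective Geometry, composing such an anti-automorphism with the annihilator from Fact~\ref{fact1} yields a semi-linear isomorphism $V\to V^{*}$, and the resulting involution on $\Delta(V)$ restricts to an automorphism of $\Gamma_{k}(V)$ exactly when $n-k=k$, i.e.\ $n=2k$; otherwise the type-swapping possibility cannot occur. The main obstacle I expect is executing the inductive descent at the extreme values $k=2$ and $k=n-2$, where one of $\phi_{k-1}$ or $\phi_{k+1}$ acts on the projective space $\Pi_{V}$ or $\Pi_{V}^{*}$ rather than on a Grassmannian with $1<k<n-1$: in these base cases one must separately verify preservation of projective lines before applying the Fundamental Theorem of Projective Geometry to a collineation of the projective space.
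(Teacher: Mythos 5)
The paper offers no proof of this statement: it is Chow's classical theorem, quoted with a citation to \cite{Chow} (with generalizations deferred to \cite{Pankov}), so there is no in-paper argument to compare against. Your sketch follows the standard route --- analyse maximal cliques, observe that two cliques of the same type meet in at most one vertex while a star and a top with $M\subset N$ meet in a line with at least three points, deduce that $f$ globally preserves or swaps types, and in the type-preserving case descend/ascend through the $\mathcal G_{j}(V)$ to build an automorphism of $\Delta(V)$ to which the Fundamental Theorem of Projective Geometry applies. That part is essentially right, and your worry about the base cases $k=2$, $k=n-2$ is actually unnecessary in this formulation: once you have inclusion-compatible bijections of consecutive levels all the way down to $\mathcal G_{1}(V)$ and up to $\mathcal G_{n-1}(V)$, you already have a simplicial automorphism of $\Delta(V)$, and the FTPG as stated in the paper (for automorphisms of $\Delta(V)$, not for collineations of $\Pi_{V}$) applies directly; no separate verification of line-preservation on $\Pi_V$ is needed. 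You should, however, check that the induced map on $\mathcal G_{k-1}(V)$ is again \emph{type-preserving} before iterating (it is, because by construction it carries the top $\langle S]_{k-1}$ to the top $\langle f(S)]_{k-1}$).

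The genuine gap is in the type-swapping case. You write that ``the analogous extension argument lifts this to an anti-automorphism of $\Delta(V)$'' and then read off $n=2k$ from the fact that such a map restricts to a self-map of $\mathcal G_{k}(V)$ only when $n-k=k$. But an anti-automorphism of $\Delta(V)$ sends $\mathcal G_{j}(V)$ to $\mathcal G_{n-j}(V)$, so when $n\ne 2k$ no such lift extending $f$ can exist at all; asserting the lift and then extracting the constraint from it is circular unless you actually carry out the construction, and the construction is precisely where it breaks. You need a direct argument that type-swapping forces $n=2k$. Two standard options: (i) a star $[M\rangle_{k}$, equipped with the lines $[M,N]_{k}$ it contains (which are graph-theoretically intrinsic, being the $2$-point-or-more intersections of pairs of maximal cliques), is a projective space of dimension $n-k$, while a top $\langle N]_{k}$ is one of dimension $k$; a type-swapping automorphism would induce a collineation between them, forcing $n-k=k$; or (ii) run the iteration in the swapped case to obtain graph isomorphisms $\Gamma_{k-j}(V)\to\Gamma_{k+j}(V)$ and observe that if $2k<n$ it terminates in an isomorphism from the complete graph $\Gamma_{1}(V)$ onto the non-complete graph $\Gamma_{2k-1}(V)$, a contradiction (and dually for $2k>n$). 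Once $n=2k$ is secured, composing $f$ with the annihilator map of Fact~\ref{fact1} reduces the swapped case to the preserved one over $V^{*}$, as you intend. A smaller but related omission: your uniqueness argument should also rule out one extension preserving types and another reversing them; this follows because a type-reversing extension forces $f$ to carry stars to tops and a type-preserving one forces the opposite, and $f$ does exactly one of the two.
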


\begin{rem}{\rm
Some generalizations of Chow's theorem can be found in \cite{Pankov}.
}\end{rem}

Let $u:V\to V'$ be a semi-linear isomorphism.
The {\it contragradient} $\check{u}$ is the semi-linear isomorphism
$$(u^{*})^{-1}:V^{*}\to V'^{*}$$
(the inverse of the adjoint mapping), see Section 1.3.3 \cite{Pankov}.
It transfers the annihilator of every subspace $S\subset V$
to the annihilator of $u(S)$.
The contragradient $\check{u}$ is linear if and only if $u$ is linear.
If $u$ is a semi-linear automorphism of $V$
then the mapping transferring every subspace $S\subset V^{*}$ to $u(S^{0})^{0}$ is the automorphism of
the flag complex $\Delta(V^{*})$ induced by ${\check u}$.

\section{Isometric embeddings of Johnson graphs in $\Gamma_{k}(V)$}
In what follows the images of isometric embeddings of Johnson graphs in $\Gamma_{k}(V)$
will be called $J$-{\it subsets} of ${\mathcal G}_{k}(V)$.

In this section we give several examples of $J$-subsets and state our first result (Theorem 4) ---
a classification of isometric embeddings of $J(l,m)$, $1<m<l-1$ in
$\Gamma_{k}(V)$, $1<k<n-1$.
The existence of such embeddings
means  that $\min\{m,l-m\}$ (the diameter of $J(l,m)$) is not greater
than $\min\{k,n-k\}$ (the diameter of $\Gamma_{k}(V)$).

A subset ${\mathcal X}\subset {\mathcal G}_{1}(V)$ is called $m$-{\it independent}
if every $m$-element subset of ${\mathcal X}$ is independent in the projective space $\Pi_{V}$,
in other words, the sum of any $m$ elements from ${\mathcal X}$ belongs to ${\mathcal G}_{m}(V)$.
An $n$-independent subset of ${\mathcal G}_{1}(V)$ consisting of $n$ elements is a base of $\Pi_{V}$.

An $m$-independent subset of ${\mathcal G}_{1}(V)$ consisting of $m+1$ elements is called an $m$-{\it simplex}
of $\Pi_{V}$ if it is not independent.
If $P_{1},\dots,P_{m+1}\in {\mathcal G}_{1}(V)$ form  an $m$-simplex then there exist linearly
independent vectors $x_{1},\dots,x_{m}\in V$ such that
$$P_{1}=\langle x_{1}\rangle,\dots,P_{m}=\langle x_{m}\rangle\;\mbox{ and }\;
P_{m+1}=\langle x_{1}+\dots+x_{m}\rangle.$$

\begin{prop}
If the division ring $R$ is infinite
then for every natural number $n'>n$ there exists an $n$-independent subset of $\Pi_{V}$ consisting of $n'$ elements.
\end{prop}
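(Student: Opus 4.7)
The plan is to build the required subset one point at a time by induction on $n'$, starting from any base of $\Pi_V$ (which is $n$-independent of size $n$) and at each stage adjoining one more point that avoids a finite union of proper subspaces.

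First I would fix the base case $n'=n$: take any base $B$ of $V$; then the $n$ one-dimensional subspaces $\langle x\rangle$, $x\in B$, form an $n$-independent subset of size $n$. For the inductive step, assume we have an $n$-independent subset ${\mathcal X}_j=\{P_1,\dots,P_j\}\subset {\mathcal G}_{1}(V)$ with $j\ge n$, and seek $P_{j+1}\in {\mathcal G}_1(V)\setminus {\mathcal X}_j$ such that ${\mathcal X}_j\cup\{P_{j+1}\}$ is still $n$-independent. By the definition of $n$-independence, this is equivalent to requiring that $P_{j+1}$ is not contained in $P_{i_1}+\dots+P_{i_{n-1}}$ for any $(n-1)$-element subset $\{i_1,\dots,i_{n-1}\}\subset\{1,\dots,j\}$. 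For each such subset, the sum is an $(n-1)$-dimensional subspace of $V$, because $n$-independence of ${\mathcal X}_j$ (applied to the $n$-element set obtained by adjoining any further index) forces $P_{i_1},\dots,P_{i_{n-1}}$ to be independent. Hence the forbidden locus is a union of $\binom{j}{n-1}$ proper subspaces of $V$.

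The key tool is the classical fact that a vector space over an infinite division ring cannot be written as a finite union of proper subspaces. I would either cite or briefly recall the standard argument: assuming $V=W_1\cup\dots\cup W_r$ with $r$ minimal and all $W_i$ proper, pick $x\in W_1\setminus\bigcup_{i\ge 2}W_i$ and $y\notin W_1$; then $y+ax\notin W_1$ for every $a\in R$, while for each $i\ge 2$ the equation $y+ax\in W_i$ has at most one solution $a$ (two solutions would force $x\in W_i$), so infinitude of $R$ yields some $a$ with $y+ax$ in no $W_i$, a contradiction. Applying this to our situation produces a vector $x\in V$ outside all the sums $P_{i_1}+\dots+P_{i_{n-1}}$ above, and one may further require $\langle x\rangle\ne P_1,\dots,P_j$ (adding the one-dimensional subspaces $P_i$, which are also proper since $n\ge 4$, to the finite union). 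Set $P_{j+1}:=\langle x\rangle$; then ${\mathcal X}_{j+1}:={\mathcal X}_j\cup\{P_{j+1}\}$ is $n$-independent of size $j+1$.

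Iterating the inductive step $n'-n$ times produces an $n$-independent subset of $\Pi_V$ of size $n'$, proving the proposition. The only nontrivial ingredient is the lemma on finite unions of proper subspaces; everything else is bookkeeping, and no single step should pose a real obstacle.
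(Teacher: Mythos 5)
Your proposal is correct and follows essentially the same route as the paper: induction on $n'$, adjoining at each step a point of ${\mathcal G}_{1}(V)$ avoiding the finitely many $(n-1)$-dimensional subspaces spanned by $(n-1)$-element subsets of the current set. The only difference is that you spell out the standard argument that a vector space over an infinite division ring is not a finite union of proper subspaces, which the paper leaves implicit.
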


\begin{proof}
The statement can be proved induction by $n'$.
Suppose that ${\mathcal X}\subset {\mathcal G}_{1}(V)$ is an $n$-independent subset consisting of $n'-1$ elements.
Denote by ${\mathcal Y}$ be the set of all $S\in{\mathcal G}_{n-1}(V)$
such that $S$ is the sum of $n-1$ elements from ${\mathcal X}$. Since $R$ is infinite,
there exists $P\in {\mathcal G}_{1}(V)$ satisfying $P\not\subset S$ for every $S\in {\mathcal Y}$.
The subset ${\mathcal X}\cup \{P\}$ is $n$-independent.
\end{proof}

Now we generalize Example 1 from \cite{CS} (Section 9).

\begin{exmp}\label{exmp2}{\rm
Suppose that $2k\le n$ and ${\mathcal X}=\{P_{1},\dots,P_{n'}\}$ is a $2k$-independent subset of ${\mathcal G}_{1}(V)$.
We denote
${\mathcal J}_{k}({\mathcal X})$ the set of all $k$-dimensional subspaces
of type $P_{i_{1}}+\dots+P_{i_{k}}$.
The mapping
$$\{i_{1},\dots,i_{k}\}\to P_{i_{1}}+\dots+P_{i_{k}}$$
is an isometric embedding of $J(n',k)$ in $\Gamma_{k}(V)$;
indeed, it is easy to see that
$$\dim[(P_{i_{1}}+\dots+P_{i_{k}})\cap (P_{j_{1}}+\dots+P_{j_{k}})]=
|\{i_{1},\dots,i_{k}\}\cap \{j_{1},\dots,j_{k}\}|.$$
If $n'=n$ and ${\mathcal X}$ is a base of $\Pi_{V}$ then  ${\mathcal J}_{k}({\mathcal X})$ is an apartment of ${\mathcal G}_{k}(V)$.
If $n'\le n+1$ and ${\mathcal X}$ is an $(n'-1)$-simplex then ${\mathcal J}_{k}({\mathcal X})$ is the set of its $(k-1)$-faces.
}\end{exmp}

A subset ${\mathcal Y}\subset {\mathcal G}_{n-1}(V)$ is called $m$-{\it independent}
if every $m$-element subset of ${\mathcal X}$ is independent in the projective space $\Pi^{*}_{V}$,
this means that the intersection of any $m$ elements from ${\mathcal X}$ belongs to ${\mathcal G}_{n-m}(V)$.
An $n$-independent subset of ${\mathcal G}_{n-1}(V)$ consisting of $n$ elements is a base of $\Pi^{*}_{V}$.
An $m$-independent subset of ${\mathcal G}_{n-1}(V)$ consisting of $m+1$ elements is said to be
an $m$-{\it simplex} of $\Pi^{*}_{V}$ if it is not independent.
The annihilator mapping of ${\mathcal G}_{i}(V)$, $i=1,n-1$ to ${\mathcal G}_{n-i}(V^{*})$ transfers
$m$-independent subsets to  $m$-independent subsets, in particular, simplices go to simplices.

\begin{exmp}\label{exmp3}{\rm
Suppose that $2k\ge n$ and ${\mathcal Y}=\{S_{1},\dots,S_{n'}\}$ is a $(2n-2k)$-independent subset of ${\mathcal G}_{n-1}(V)$.
The intersection of any $n-k$ elements from ${\mathcal Y}$ belongs to ${\mathcal G}_{k}(V)$
and we denote by ${\mathcal J}^{*}_{k}({\mathcal Y})$ the set of all such intersections.
This is a $J$-subset.
Indeed, ${\mathcal Y}^{0}$ is a $(2n-2k)$-independent subset of ${\mathcal G}_{1}(V^{*})$ and
the annihilator mapping of ${\mathcal G}_{n-k}(V^{*})$ to ${\mathcal G}_{k}(V)$
transfers ${\mathcal J}_{n-k}({\mathcal Y}^{0})$ (see Example \ref{exmp2})
to ${\mathcal J}^{*}_{k}({\mathcal Y})$.
If $n'=n$ and ${\mathcal Y}$ is a base of $\Pi^{*}_{V}$ then  ${\mathcal J}^{*}_{k}({\mathcal Y})$ is an apartment of ${\mathcal G}_{k}(V)$.
If $n'\le n+1$ and ${\mathcal Y}$ is an $(n'-1)$-simplex then ${\mathcal J}^{*}_{k}({\mathcal Y})$ is the set of its $(n-k-1)$-faces.
}\end{exmp}

We will need the following modifications of
Examples \ref{exmp2} and \ref{exmp3}.

\begin{exmp}\label{exmp4}{\rm
Let $M$ be a $(k-m)$-dimensional subspace of $V$ such that $m>1$ and $m+k\le n$.
Then
$$2m\le n-k+m.$$
Consider the $(n-k+m-1)$-dimensional projective space $[M\rangle_{k-m+1}$
(it can be identified with $\Pi_{V/M}$).
Let ${\mathcal X}$ be a finite $(2m)$-independent subset of $[M\rangle_{k-m+1}$.
By Example \ref{exmp2},
${\mathcal J}_{m}({\mathcal X})$ is a $J$-subset contained in
the parabolic subspace $[M\rangle_{k}$;
it is the image of an isometric embedding of $J(l,m)$, $l=|{\mathcal X}|$ in $\Gamma_{k}(V)$.
In the case when $M=0$, we get the $J$-subset constructed in Example \ref{exmp2}.
Let $N$ be the sum of all elements from ${\mathcal X}$ (possible $N=V$).
If ${\mathcal X}$ is a base of the projective space $[M,N]_{k-m+1}$ then
${\mathcal J}_{m}({\mathcal X})$ is an apartment of the parabolic subspace $[M,N]_{k}$.
If $$l\le n-k+m+1$$ and ${\mathcal X}$ is an $(l-1)$-simplex of $[M\rangle_{k-m+1}$ then
${\mathcal J}_{m}({\mathcal X})$ is the set of  its $(m-1)$-faces.
}\end{exmp}

\begin{exmp}\label{exmp5}{\rm
Let $N$ be a $(k+m)$-dimensional subspace of $V$ such that $1<m\le k$.
Then $2m\le k+m$.
Consider the $(k+m-1)$-dimensional projective space $\langle N]_{k+m-1}$.
Let ${\mathcal Y}$ be a finite $(2m)$-independent subset of $\langle N]_{k+m-1}$.
As in Example \ref{exmp3}, the intersection of any $m$ elements from ${\mathcal Y}$ is $k$-dimensional
and we define ${\mathcal J}^{*}_{k}({\mathcal Y})$.
This is the image of an isometric embedding of $J(l,m)$, $l=|{\mathcal Y}|$
in $\Gamma_{k}(V)$.
If $N=V$ then we get the $J$-subset constructed in Example \ref{exmp3}.
Let $M$ be the intersection of all elements from ${\mathcal Y}$
(possible $M=0$). If ${\mathcal Y}$ is a base of the projective space $[M,N]_{k+m-1}$
then ${\mathcal J}^{*}_{k}({\mathcal Y})$  is an apartment of the parabolic subspace $[M,N]_{k}$.
If $$l\le k+m+1$$ and ${\mathcal Y}$ is an $(l-1)$-simplex of $\langle N]_{k+m-1}$ then
${\mathcal J}^{*}_{k}({\mathcal Y})$ is the set of its $(m-1)$-faces.
}\end{exmp}

By Proposition 1 and Examples 1 -- 4, isometric embeddings of  $J(l,m)$ in $\Gamma_{k}(V)$
exist for all pairs $l,m$ satisfying
$$\min\{m,l-m\}\le \min\{k,n-k\}$$
if the division ring $R$ is infinite.

\begin{theorem}
Let $n,k,l,m$ be natural numbers satisfying $n,l\ge 4$, $1<k<n-1$, $1<m<l-1$ and
$$m':=\min\{m,l-m\}\le \min\{k,n-k\}.$$
If ${\mathcal J}$ is the image of an isometric embedding of $J(l,m)$ in $\Gamma_{k}(V)$
then one of the following possibilities is realized:
\begin{enumerate}
\item[$\bullet$] there exists $M\in {\mathcal G}_{k-m'}(V)$ such that ${\mathcal J}$ is defined by
a $(2m')$-independent subset of $[M\rangle_{k-m'+1}$ consisting of $l$ elements {\rm (}Example \ref{exmp4}{\rm)},
\item[$\bullet$] there exists $N\in {\mathcal G}_{k+m'}(V)$ such that ${\mathcal J}$ is defined by
a $(2m')$-independent subset of $\langle N]_{k+m'-1}$ consisting of $l$ elements {\rm (}Example \ref{exmp5}{\rm)}.
\end{enumerate}
The image of every isometric embedding of $J(2m,m)$ in $\Gamma_{k}(V)$
is an apartment in a parabolic subspace of type
$$[M,N]_{k},\;\;\;M\in {\mathcal G}_{k-m}(V),\;\;N\in {\mathcal G}_{k+m}(V).$$
\end{theorem}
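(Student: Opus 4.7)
\emph{Setup and type consistency.} Since $J(l,m) \cong J(l, l-m)$ via complementation and $\Gamma_{k}(V) \cong \Gamma_{n-k}(V^{*})$ via the annihilator mapping (Fact \ref{fact1}), I may assume $m = m' \le k$; then $l \ge 2m$ and $k + m \le n$. Given an isometric embedding $f\colon J(l,m) \to \Gamma_{k}(V)$, the assumption $1 < m < l - 1$ forces every maximal clique of $J(l,m)$ (a Johnson-star of size $l-m+1$ or a Johnson-top of size $m+1$) to have cardinality at least three; its $f$-image is therefore a clique of size $\ge 3$ in $\Gamma_{k}(V)$ and lies in a unique maximal clique, either a Grassmann-star $[M\rangle_{k}$ or a Grassmann-top $\langle N]_{k}$. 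A Johnson-star $\{C : A \subset C\}$ and a Johnson-top $\{C : C \subset D\}$ with $A \subset D$ meet in exactly two vertices, whereas two distinct Grassmann-stars (respectively two distinct Grassmann-tops) meet in at most one element; hence their $f$-images cannot both lie in stars, nor both in tops. Propagating this along the connected bipartite incidence of $(m-1)$-subsets and $(m+1)$-subsets of $[l]$ gives the dichotomy: either all Johnson-stars are sent to Grassmann-stars (and all Johnson-tops to Grassmann-tops), or the types are swapped. Applying the annihilator if necessary, I assume the first alternative; it will lead to Example \ref{exmp4}, while the other alternative yields Example \ref{exmp5}.

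\emph{Pivot hyperplanes and the common pivot.} For each $(m-1)$-subset $A \subset [l]$, let $M_{A} \in {\mathcal G}_{k-1}(V)$ be such that $f(\{C : A \subset C\}) \subset [M_{A}\rangle_{k}$. For every $B \in J(l,m)$ and each $i \in B$, $M_{B \setminus \{i\}}$ is a hyperplane of $f(B)$, and whenever $A \cup A' = B$ a dimension count combined with $M_{A} \ne M_{A'}$ yields $M_{A} + M_{A'} = f(B)$. The technical core of the proof is to show that these $m$ hyperplanes of $f(B)$ are in general position, so that $M_{B} := \bigcap_{i \in B} M_{B \setminus \{i\}}$ has dimension $k - m$, and that $M_{B}$ is independent of $B$; call this common subspace $M$. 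For $m = 2$ this follows from the elementary observation that three distinct hyperplanes $M_{i}, M_{j}, M_{k}$ of the $(k+1)$-dimensional subspace coming from a Grassmann-top image have coinciding pairwise intersections. For general $m$ the natural approach is induction: the restriction of $f$ to those $B$ containing a fixed $i_{0} \in [l]$ is an isometric embedding of $J(l-1, m-1)$ in $\Gamma_{k}(V)$, from which the inductive hypothesis produces an ``$i_{0}$-pivot'' of dimension $k - m + 1$, and comparing the pivots for different $i_{0}$ (together with constraints from Johnson-tops) produces the global $M$.

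\emph{Coordinates and the apartment case $l = 2m$.} With $M$ fixed, set $P_{i} := \bigcap_{A \ni i,\, |A| = m-1} M_{A}$ for each $i \in [l]$. Modulo $M$, the intersection is one-dimensional, so $P_{i} \in [M\rangle_{k-m+1}$; moreover $P_{i} \subset f(B)$ whenever $i \in B$, and a dimension count modulo $M$ yields $f(B) = \sum_{i \in B} P_{i}$. The isometric condition $d_{k}(f(B), f(B')) = m - |B \cap B'|$ then translates into linear independence modulo $M$ of any $2m$ of the $P_{i}$'s, i.e., $\{P_{1}, \dots, P_{l}\}$ is a $(2m)$-independent subset of $[M\rangle_{k-m+1}$ of cardinality $l$, placing ${\mathcal J}$ in the form of Example \ref{exmp4}; undoing the annihilator reduction gives Example \ref{exmp5} in the other branch. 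When $l = 2m$, a $(2m)$-independent family of cardinality exactly $2m$ is linearly independent, so it is a base of the projective subspace $[M, N]_{k-m+1}$ with $N := \sum_{i} P_{i} \in {\mathcal G}_{k+m}(V)$; accordingly ${\mathcal J} = {\mathcal J}_{m}(\{P_{i}\})$ is an apartment of the parabolic subspace $[M, N]_{k}$, which is the final assertion. The principal obstacle is the pivot step: controlling the dimension and $B$-invariance of $\bigcap_{i \in B} M_{B \setminus \{i\}}$, where most of the bookkeeping (and the induction on $m$) resides.
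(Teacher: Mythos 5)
Your opening step (the star/top dichotomy) and your endgame (reading off the $(2m)$-independence and the $l=2m$ apartment case) agree with the paper, but there are two gaps, one of which is the heart of the theorem. The smaller one: you deduce that the image of a maximal Johnson-clique lies in a \emph{unique} maximal clique of $\Gamma_{k}(V)$ merely from its having at least three elements. That is not enough: a line $[M,N]_{k}$ of ${\mathfrak G}_{k}(V)$ can contain arbitrarily large cliques and is contained in both the star $[M\rangle_{k}$ and the top $\langle N]_{k}$, so size $\ge 3$ does not force uniqueness. The paper's Lemma \ref{lemma1} excludes this case by taking a second maximal Johnson-clique meeting the first in exactly two vertices and showing that any maximal clique of $\Gamma_{k}(V)$ containing its image would then have to contain the whole line $[M,N]_{k}$, forcing every vertex of one clique to be adjacent to every vertex of the other, which is impossible.

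The serious gap is the step you yourself call ``the principal obstacle'': that the $m$ star-centres $M_{B\setminus\{i\}}$, $i\in B$, are in general position inside $f(B)$ and that $M_{B}=\bigcap_{i\in B}M_{B\setminus\{i\}}$ is independent of $B$. This is exactly where the content of the theorem lies, and your proposed induction does not deliver it: restricting $f$ to $\{B: i_{0}\in B\}$ gives an isometric embedding of $J(l-1,m-1)$ into the \emph{same} $\Gamma_{k}(V)$, so the inductive hypothesis hands you $l$ separate pivots $M_{i_{0}}$ of dimension $k-m+1$ with no a priori relation between them; ``comparing the pivots \dots\ produces the global $M$'' is an assertion, not an argument, and neither $\dim M_{B}=k-m$ nor the $B$-invariance follows from what you have written. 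The paper takes a different and genuinely load-bearing route: using $2m\le l$ to lift any pair $S,U\in{\mathcal A}_{m-1}$ to $S',U'\in{\mathcal A}_{m}$ with $S\cap U=S'\cap U'$, it proves that the induced map on star-centres $f_{m-1}:{\mathcal A}_{m-1}\to{\mathcal G}_{k-1}(V)$ is itself an isometric embedding of $J(l,m-1)$ --- same $l$, target index dropped by one --- and iterates down to $f_{1}$, whose image consists of $l$ mutually adjacent elements $T_{1},\dots,T_{l}$ of ${\mathcal G}_{k-m+1}(V)$ with $f(P_{i_{1}}+\dots+P_{i_{m}})=T_{i_{1}}+\dots+T_{i_{m}}$ (Lemma \ref{lemma2}). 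The isometry condition applied at the bottom level then shows that any $2m$ of the $T_{j}$ span a $(k+m)$-dimensional space (Lemma \ref{lemma3}), which simultaneously rules out the top case, produces $M$ as the common intersection of the $T_{j}$, and gives the $(2m)$-independence. You would need to supply an argument of comparable strength for your pivot step before the proposal can be regarded as a proof.
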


We assume that ${\mathcal G}_{0}(V)=\{0\}$ and ${\mathcal G}_{n}=\{V\}$.

\begin{cor}
Let ${\mathcal J}$ be the image of an isometric embedding of $J(n,k)$ in $\Gamma_{k}(V)$, $1<k<n-1$.
Then the following assertions are fulfilled:
\begin{enumerate}
\item[{\rm (1)}] if $2k<n$ then ${\mathcal J}$ is defined by a $(2k)$-independent subset of $\Pi_{V}$
consisting of $n$ elements {\rm (}Example \ref{exmp2}{\rm)}
or there exists $N\in {\mathcal G}_{2k}(V)$ such that
${\mathcal J}$ is defined by a $(2k)$-independent subset of $\langle N]_{2k-1}$ consisting of $n$ elements
{\rm (}Example \ref{exmp5}{\rm)},
\item[{\rm (2)}] if $2k>n$ then
${\mathcal J}$ is defined by a $(2n-2k)$-independent subset of $\Pi^{*}_{V}$ consisting of $n$ elements
{\rm (}Example \ref{exmp3}{\rm)}
or there exists $M\in {\mathcal G}_{2k-n}(V)$ such that ${\mathcal J}$ is defined by
a $(2n-2k)$-independent subset of $[M\rangle_{2k-n+1}$ consisting of $n$ elements {\rm (}Example \ref{exmp4}{\rm)},
\item[{\rm (3)}]
if $n=2k$ then ${\mathcal J}$ is an apartment of ${\mathcal G}_{k}(V)$.
\end{enumerate}
Therefore, the image of every isometric embedding of $J(n,k)$ in $\Gamma_{k}(V)$
is an apartment of ${\mathcal G}_{k}(V)$ if and only if $n=2k$.
\end{cor}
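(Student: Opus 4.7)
The plan is to read the corollary off Theorem 4 by taking $l = n$ and $m = k$, supplemented by a single explicit construction for the ``only if'' implication. With this substitution, $m' = \min\{m,l-m\} = \min\{k,n-k\}$, the hypothesis of Theorem 4 is automatic, and the three cases of the corollary correspond precisely to the sign of $n-2k$: case (3) matches the final clause of Theorem 4 (since $n=2k$ means $l = 2m$), while (1) and (2) match its two bullets.

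I would then unpack each case. In case (3), the parabolic subspaces $[M,N]_k$ supplied by Theorem 4 satisfy $M \in {\mathcal G}_{k-m}(V) = {\mathcal G}_0(V) = \{0\}$ and $N \in {\mathcal G}_{k+m}(V) = {\mathcal G}_n(V) = \{V\}$ under the convention stipulated just before the corollary, so $[M,N]_k = {\mathcal G}_k(V)$ and an apartment of this parabolic subspace is an apartment of ${\mathcal G}_k(V)$. In case (1), with $2k<n$ and $m' = k$, the first bullet of Theorem 4 forces $M \in {\mathcal G}_0(V) = \{0\}$ with $[M\rangle_{k-m'+1} = \Pi_V$, which is exactly Example \ref{exmp2}; the second bullet gives $N \in {\mathcal G}_{2k}(V)$, which is Example \ref{exmp5}. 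Case (2), with $2k>n$ and $m' = n-k$, is symmetric, either by reading off the bullets directly or by dualising via Fact \ref{fact1}: one obtains $M \in {\mathcal G}_{2k-n}(V)$ (Example \ref{exmp4}) or $N = V$ (Example \ref{exmp3}, inside $\Pi^*_V$).

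For the concluding ``if and only if'', the $\Leftarrow$ direction is case (3). For $\Rightarrow$, assume without loss of generality $2k<n$ (dualise via Fact \ref{fact1} otherwise) and exhibit a non-apartment $J$-subset: pick linearly independent $x_1, \dots, x_{n-1} \in V$ and set $P_i = \langle x_i\rangle$ for $i \le n-1$ and $P_n = \langle x_1 + \dots + x_{n-1}\rangle$. The resulting $(n-1)$-simplex of $\Pi_V$ is $(2k)$-independent, because any $n-1$ of its members are independent and $2k \le n-1$, so by Example \ref{exmp2} it produces an isometric embedding of $J(n,k)$ into $\Gamma_k(V)$ whose image lies in the proper subspace $\langle x_1, \dots, x_{n-1}\rangle \neq V$ and therefore cannot be an apartment of ${\mathcal G}_k(V)$.

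The main obstacle is not conceptual but bookkeeping: one has to match the parameters $M \in {\mathcal G}_{k-m'}(V)$ and $N \in {\mathcal G}_{k+m'}(V)$ in Theorem 4 to each branch of the corollary, and use the conventions ${\mathcal G}_0(V) = \{0\}$, ${\mathcal G}_n(V) = \{V\}$ systematically to collapse the degenerate endpoints. Once that translation is in hand, the three parts of the corollary unpack mechanically and the ``only if'' direction is a one-line application of Example \ref{exmp2}.
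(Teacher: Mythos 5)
Your proposal is correct and matches the paper's intent: the corollary is stated without a separate proof precisely because it is the specialization $l=n$, $m=k$ of Theorem 4, with the conventions ${\mathcal G}_{0}(V)=\{0\}$, ${\mathcal G}_{n}(V)=\{V\}$ collapsing the degenerate endpoints exactly as you describe. Your $(n-1)$-simplex construction for the ``only if'' direction is also the example the paper itself has in mind (it is the non-apartment case singled out in Example \ref{exmp2} and reused in Corollary 2), and your verification that its image lies in a proper subspace and hence is not an apartment is sound.
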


\section{Proof of Theorem 4}

Let $W$ be an $l$-dimensional vector space
and let $B$ be a base of $W$.
For each number $i\in \{1,\dots,l-1\}$
we denote by ${\mathcal A}_{i}$ the associated apartment of ${\mathcal G}_{i}(W)$.
Let $f:{\mathcal A}_{m}\to {\mathcal G}_{k}(V)$ be  an isometric embedding
of $\Gamma({\mathcal A}_{m})$ in $\Gamma_{k}(V)$.
Since $J(l,m)$ and $J(l,l-m)$ are isomorphic, we can assume that $m\le l-m$
(in other words, $m'=m$).
Then the diameter of $\Gamma({\mathcal A}_{m})$ is equal to $m$
and we have
$$m\le \min\{k,n-k\}.$$
The image of this embedding will be denoted by ${\mathcal J}$.
Then $f$ is an isomorphism of $\Gamma({\mathcal A}_{m})$ to $\Gamma({\mathcal J})$;
moreover, for any $S,U\in {\mathcal A}_{m}$ we have
$$d_{m}(S,U)=d_{k}(f(S),f(U)).$$
%We need to show that ${\mathcal J}$ is an apartment in a parabolic subspace of ${\mathfrak G}_{k}(V)$
%if $l=2m$.
The proof of Theorem 4 will be given in several steps.

{\bf 1}. Our first step is the following lemma.
\begin{lemma}\label{lemma1}
Every maximal clique of $\Gamma({\mathcal J})$
is contained in precisely one maximal clique of $\Gamma_{k}(V)$.
\end{lemma}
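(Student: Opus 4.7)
The plan is to exploit the known structure of cliques in $\Gamma_{k}(V)$: a clique of cardinality $\geq 3$ lies in a unique maximal clique (star or top) \emph{unless} it is contained in some line $[M',N']_{k}$, in which case it is contained in both the star $[M'\rangle_{k}$ and the top $\langle N']_{k}$ at once. Since $f$ is a graph isomorphism of $\Gamma({\mathcal A}_{m})$ onto $\Gamma({\mathcal J})$, any maximal clique of $\Gamma({\mathcal J})$ equals $f(C_{0})$ for some star or top $C_{0}$ of $\Gamma({\mathcal A}_{m})$, and under the hypothesis $1<m<l-1$ both kinds have cardinality at least three. So the lemma reduces to ruling out the line case.

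Suppose for contradiction that $f(C_{0})\subset [M',N']_{k}$ with $\dim M'=k-1$ and $\dim N'=k+1$. Then for any two distinct $A,A'\in C_{0}$ the images are adjacent elements of $[M',N']_{k}$, hence $f(A)\cap f(A')=M'$ and $f(A)+f(A')=N'$. I would next pick three distinct $A_{1},A_{2},A_{3}\in C_{0}$ together with an element $D\in{\mathcal A}_{m}\setminus C_{0}$ such that $D$ is adjacent to both $A_{1}$ and $A_{3}$ but at distance $2$ from $A_{2}$. Such a $D$ is produced by a short coordinate construction from the base $B$ of $W$, split into the star case (which uses $l\geq m+2$) and the top case (which uses $m\geq 2$); both inequalities follow from $1<m<l-1$. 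This bookkeeping is the main routine obstacle but contains no surprise.

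Apply $f$. If $f(D)\cap f(A_{1})$ were equal to $M'$, then $M'\subset f(D)$, whence $M'\subset f(D)\cap f(A_{2})$, giving $d_{k}(f(D),f(A_{2}))\leq 1$, a contradiction. So $f(D)\cap f(A_{1})$ is a $(k-1)$-dimensional hyperplane of $f(A_{1})$ distinct from $M'$; similarly for $f(A_{3})$. Using $f(A_{1})\cap f(A_{3})=M'$,
\[
f(D)\cap f(A_{1})\cap f(A_{3})\;=\;M'\cap f(D)\;=\;\bigl(f(D)\cap f(A_{1})\bigr)\cap M'
\]
is the intersection of two distinct hyperplanes of $f(A_{1})$ and therefore has dimension $k-2$. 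Hence the triangle $\{f(A_{1}),f(A_{3}),f(D)\}$ in $\Gamma_{k}(V)$ is not contained in any star, and a direct dimension count for triangles in a Grassmann graph forces $\dim(f(A_{1})+f(A_{3})+f(D))=k+1$. Combined with $f(A_{1})+f(A_{3})=N'$, this gives $f(D)\subset N'$.

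But now $f(D)$ and $f(A_{2})$ are two distinct $k$-dimensional subspaces of the $(k+1)$-dimensional space $N'$, so $\dim(f(D)\cap f(A_{2}))=k-1$, i.e.\ $d_{k}(f(D),f(A_{2}))=1$, contradicting the choice of $D$. Thus $f(C_{0})$ lies on no line of $\Gamma_{k}(V)$, which by the opening observation completes the proof.
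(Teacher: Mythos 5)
Your proof is correct, and after the shared opening reduction it takes a genuinely different route from the paper's. Both arguments begin the same way: a maximal clique of $\Gamma({\mathcal J})$ contained in two distinct maximal cliques of $\Gamma_{k}(V)$ must lie in their intersection, hence in a line $[M',N']_{k}$. From there the paper stays purely combinatorial: it picks a second maximal clique ${\mathcal Z}$ of $\Gamma({\mathcal A}_{m})$ meeting $f^{-1}({\mathcal Y})$ in two vertices, observes that any maximal clique of $\Gamma_{k}(V)$ containing $f({\mathcal Z})$ must then contain the whole line $[M',N']_{k}$ and hence all of ${\mathcal Y}$, and concludes that two distinct maximal cliques of $\Gamma({\mathcal J})$ would be mutually fully adjacent --- contradicting maximality. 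You instead use a single auxiliary vertex $D$ with a prescribed distance profile (adjacent to $A_{1},A_{3}$, at distance $2$ from $A_{2}$) and run a linear-algebraic computation: the triangle dichotomy for $\Gamma_{k}(V)$ forces $f(D)\subset N'$, which collides with $d_{k}(f(D),f(A_{2}))=2$. Your version costs more bookkeeping (the existence of $D$ in both the star and top cases, and the star-or-top dichotomy for triangles, both of which do check out under $1<m<l-1$), whereas the paper's argument needs only the standard facts about intersections of maximal cliques and lines and is shorter; on the other hand your argument makes the geometric obstruction very concrete and, like the paper's, uses only that $f$ is a graph isomorphism onto its image rather than the full isometry. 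Both are valid proofs of the lemma.
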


\begin{proof}
Let ${\mathcal Y}$ be a maximal clique of $\Gamma({\mathcal J})$.
Suppose that it is contained in at least two distinct maximal cliques of $\Gamma_{k}(V)$.
The intersection of two distinct maximal cliques of
$\Gamma_{k}(V)$ is the empty set, a point, or a line.
Thus ${\mathcal Y}$ is contained in a line $[M,N]_{k}$.
Clearly, $f^{-1}({\mathcal Y})$ is a maximal clique of $\Gamma({\mathcal A}_{m})$
and there is a maximal clique ${\mathcal Z}\ne f^{-1}({\mathcal Y})$ of $\Gamma({\mathcal A}_{m})$
intersecting $f^{-1}({\mathcal Y})$  in two vertices.
Then $f({\mathcal Z})$ is a maximal clique of $\Gamma({\mathcal J})$ intersecting
${\mathcal Y}$ in two vertices.
Every maximal clique of $\Gamma_{k}(V)$ contains a line or intersects this line
at most in a point.
Therefore, every maximal clique of $\Gamma_{k}(V)$ containing $f({\mathcal Z})$
contains the line $[M,N]_{k}$; hence it coincides with the star $[M\rangle_{k}$
or the top $\langle N]_{k}$.
The latter means that all vertices of $f({\mathcal Z})$
are adjacent with all vertices of ${\mathcal Y}$ which is impossible.
\end{proof}

A maximal clique ${\mathcal Y}$ of $\Gamma({\mathcal J})$ is said to be a {\it star} of ${\mathcal J}$
or a {\it top} of ${\mathcal J}$ if the maximal clique of $\Gamma_{k}(V)$
containing ${\mathcal Y}$ is a star or a top, respectively.

If the intersection of two distinct maximal cliques of $\Gamma({\mathcal A}_{m})$
consists of two vertices then these maximal cliques are of different types (one of them is a star and the other is a top).
The same holds for maximal cliques of $\Gamma({\mathcal J})$.
For any distinct maximal cliques ${\mathcal S}$ and ${\mathcal S}'$
of $\Gamma({\mathcal A}_{m})$
there is a sequence of maximal cliques of $\Gamma({\mathcal A}_{m})$
$${\mathcal S}={\mathcal S}_{0},{\mathcal S}_{1},\dots,{\mathcal S}_{i}={\mathcal S}'$$
such that $|S_{j-1}\cap S_{j}|=2$ for every $j\in \{1,\dots,i\}$.
This implies that for the mapping $f$ one of the following possibilities is realized:
\begin{enumerate}
\item[(A)] stars go to stars and tops go to tops,
\item[(B)] stars go to tops  and tops go to stars.
\end{enumerate}
In the next two steps we assume that the mapping $f$ satisfies (A).

{\bf 2}.
Denote by ${\mathcal J}_{k-1}$ the set of all $(k-1)$-dimensional subspaces of $V$
corresponding to the stars of ${\mathcal J}$.
The mapping $f$ induces a bijection
$$f_{m-1}:{\mathcal A}_{m-1}\to {\mathcal J}_{k-1}$$
satisfying
$$f({\mathcal A}_{m}\cap[ S\rangle_{m})={\mathcal J}\cap[ f_{m-1}(S)\rangle_{k}$$
for all $S\in {\mathcal A}_{m-1}$.
Then
$$f_{m-1}({\mathcal A}_{m-1}\cap \langle U]_{m-1})={\mathcal J}_{k-1}\cap \langle f(U)]_{k-1}$$
for all $U\in {\mathcal A}_{m}$.
The latter implies that $f_{m-1}$
sends adjacent elements of ${\mathcal A}_{m-1}$
to adjacent elements of ${\mathcal J}_{k-1}$.

Now we show that for all $S,U\in {\mathcal A}_{m-1}$
$$d_{m-1}(S,U)=d_{k-1}(f_{m-1}(S),f_{m-1}(U)).$$
Since $f_{m-1}$
sends adjacent elements of ${\mathcal A}_{m-1}$
to adjacent elements of ${\mathcal J}_{k-1}$,
$$d_{m-1}(S,U)\ge d_{k-1}(f_{m-1}(S),f_{m-1}(U)).$$
The condition $2m\le l$ guarantees the existence of $S',U'\in {\mathcal A}_{m}$
such that $S\subset S'$, $U\subset U'$ and
$$S\cap U=S'\cap U'.$$
Then
\begin{equation}\label{eq1}
d_{m-1}(S,U)=d_{m}(S',U')-1
\end{equation}
(indeed $d_{m-1}(S,U)=m-1-\dim(S\cap U)=m-1-\dim(S'\cap U')=d_{m}(S',U')-1$).
The mapping $f$ is an isometric embedding and
\begin{equation}\label{eq2}
d_{m}(S',U')=d_{k}(f(S'),f(U')).
\end{equation}
Since $f_{m-1}$ is induced by $f$,
we have
$$f_{m-1}(S)\subset f(S')\;\mbox{ and }\;f_{m-1}(U)\subset f(U')$$
which implies that
\begin{equation}\label{eq3}
\dim(f_{m-1}(S)\cap f_{m-1}(U))\le \dim(f(S')\cap f(U')).
\end{equation}
By \eqref{eq1} -- \eqref{eq3},
$$d_{m-1}(S,U)=d_{m}(S',U')-1=d_{k}(f(S'),f(U'))-1=k-1-\dim(f(S')\cap f(U'))\le$$
$$k-1-\dim(f_{m-1}(S)\cap f_{m-1}(U))=d_{k-1}(f_{m-1}(S),f_{m-1}(U)).$$
Thus
$$d_{m-1}(S,U)\le d_{k-1}(f_{m-1}(S),f_{m-1}(U))$$
and we get the required equality.

{\bf 3}.
So, $f_{m-1}$ is an isometric embedding of $\Gamma({\mathcal A}_{m-1})$ in $\Gamma_{k-1}(V)$.
Step by step, we construct a sequence of isometric embeddings
$$f_{i}:{\mathcal A}_{i}\to {\mathcal G}_{k-m+i}(V),\;\;\;\;i=m,\dots,1,$$
of $\Gamma({\mathcal A}_{i})$ in $\Gamma_{k-m+i}(V)$ such that $f_{m}=f$.
Denote by ${\mathcal J}_{k-m+i}$ the image of $f_{i}$ for each $i$.
If $i>1$ then
\begin{equation}\label{eq4}
f_{i}({\mathcal A}_{i}\cap[ S\rangle_{i})={\mathcal J}_{k-m+i}\cap[ f_{i-1}(S)\rangle_{k-m+i}
\;\;\;\;\;\forall\; S\in {\mathcal A}_{i-1}
\end{equation}
and
\begin{equation}\label{eq5}
f_{i-1}({\mathcal A}_{i-1}\cap \langle U]_{i-1})={\mathcal J}_{k-m+i-1}\cap \langle f(U)]_{k-m+i-1}
\;\;\;\;\;\forall\;U\in {\mathcal A}_{i}.
\end{equation}
This means that every element of ${\mathcal J}_{j}$, $j>k-m+1$
is the sum of two adjacent elements from ${\mathcal J}_{j-1}$.
Thus every element of ${\mathcal J}={\mathcal J}_{k}$
is the sum of some elements from ${\mathcal J}_{k-m+1}$.

Denote by $P_{1},\dots, P_{l}$ the elements of ${\mathcal A}_1$.
Then
$$T_{1}:=f_{1}(P_{1}),\dots,T_{l}:=f_{1}(P_{l})$$
form ${\mathcal J}_{k-m+1}$.
Using \eqref{eq4} and \eqref{eq5} we establish that
\begin{equation}\label{eq6}
f({\mathcal A}_{m}\cap[ P_{j}\rangle_{m})={\mathcal J}\cap[T_{j}\rangle_{k}.
\end{equation}

\begin{lemma}\label{lemma2}
A subspace of $V$ belongs to ${\mathcal J}$
if and only if it is
the sum of $m$ elements from ${\mathcal J}_{k-m+1}$.
\end{lemma}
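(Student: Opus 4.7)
The plan is to prove by induction on $j\in\{1,\dots,m\}$ the auxiliary identity
$$T_{i_{1}}+\dots+T_{i_{j}}=f_{j}(P_{i_{1}}+\dots+P_{i_{j}})\qquad(\ast)$$
for every choice of pairwise distinct indices $i_{1},\dots,i_{j}\in\{1,\dots,l\}$. Granting $(\ast)$ for $j=m$, the lemma follows at once: every $U\in{\mathcal J}$ equals $f(P_{i_{1}}+\dots+P_{i_{m}})$ for some $m$-element subset of ${\mathcal A}_{1}$ and $(\ast)$ rewrites this as a sum of $m$ elements of ${\mathcal J}_{k-m+1}$; conversely, any $m$ distinct elements of ${\mathcal J}_{k-m+1}$ are of the form $T_{i_{1}},\dots,T_{i_{m}}$, and $(\ast)$ identifies their sum with the element $f(P_{i_{1}}+\dots+P_{i_{m}})\in{\mathcal J}$.

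The base case $j=1$ is the definition of $T_{i}$. For the inductive step, set $W_{j}=P_{i_{1}}+\dots+P_{i_{j}}\in{\mathcal A}_{j}$ and specialize \eqref{eq5} (with $i$ replaced by $j$ and $U=W_{j}$):
$$f_{j-1}({\mathcal A}_{j-1}\cap\langle W_{j}]_{j-1})={\mathcal J}_{k-m+j-1}\cap\langle f_{j}(W_{j})]_{k-m+j-1},$$
so every $(j-1)$-subspace of $W_{j}$ that lies in ${\mathcal A}_{j-1}$ is sent by $f_{j-1}$ into $f_{j}(W_{j})$. Apply this to the two distinct apartment-subspaces
$$W_{j-1}=P_{i_{1}}+\dots+P_{i_{j-1}},\qquad W'_{j-1}=P_{i_{1}}+\dots+P_{i_{j-2}}+P_{i_{j}}$$
of $W_{j}$; by the inductive hypothesis their $f_{j-1}$-images are $T_{i_{1}}+\dots+T_{i_{j-1}}$ and $T_{i_{1}}+\dots+T_{i_{j-2}}+T_{i_{j}}$ respectively, whose summands together exhaust $\{T_{i_{1}},\dots,T_{i_{j}}\}$. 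This already gives $T_{i_{1}}+\dots+T_{i_{j}}\subset f_{j}(W_{j})$.

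For the reverse inclusion, $f_{j-1}$ is an isometric embedding (Step 2 above) and in particular injective, so $f_{j-1}(W_{j-1})\ne f_{j-1}(W'_{j-1})$ are two distinct $(k-m+j-1)$-dimensional subspaces inside the $(k-m+j)$-dimensional space $f_{j}(W_{j})$; a one-line dimension count forces their sum to equal $f_{j}(W_{j})$, and since this sum lies inside $T_{i_{1}}+\dots+T_{i_{j}}$, equality $(\ast)$ is established. No substantial obstacle is anticipated; the only mildly delicate point is the existence of a second apartment-subspace $W'_{j-1}$ of $W_{j}$ distinct from $W_{j-1}$, which is obtained by swapping $P_{i_{j-1}}$ for $P_{i_{j}}$ and is available precisely for $j\ge 2$, matching exactly the range of the inductive step.
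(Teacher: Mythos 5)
Your proof is correct. It arrives at the same key identity $f(P_{i_{1}}+\dots+P_{i_{m}})=T_{i_{1}}+\dots+T_{i_{m}}$ as the paper, but by a genuinely different mechanism. The paper combines two ingredients already in place before the lemma: the observation, obtained by iterating \eqref{eq4} and \eqref{eq5}, that every element of ${\mathcal J}$ is the sum of \emph{some} elements of ${\mathcal J}_{k-m+1}$, and formula \eqref{eq6}, which says that $T_{j}\subset f(S)$ precisely when $j\in\{i_{1},\dots,i_{m}\}$; these two facts immediately pin down the summands, with no dimension count. You instead run an explicit induction up the tower $f_{1},\dots,f_{m}$, proving $f_{j}(P_{i_{1}}+\dots+P_{i_{j}})=T_{i_{1}}+\dots+T_{i_{j}}$ at every level using only \eqref{eq5}, the injectivity of $f_{j-1}$, and the fact that two distinct codimension-one subspaces of $f_{j}(W_{j})$ must span it. Your route bypasses \eqref{eq6} entirely and is self-contained (it in effect reproves the ``sum of some elements'' claim in a sharpened, quantitative form), at the cost of carrying the induction explicitly; the paper's route is shorter given the machinery of Step 3. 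Both arguments are sound, and your handling of the one delicate point --- the existence of a second apartment element $W'_{j-1}\subset W_{j}$ distinct from $W_{j-1}$, available exactly for $j\ge 2$ --- is correct.
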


\begin{proof}
Suppose that $i_{1},\dots,i_{m}\in \{1,\dots, l\}$ are distinct.
Then
$$S:=P_{i_{1}}+\dots +P_{i_{m}}\in {\mathcal A}_{m}$$
belongs to ${\mathcal A}_{m}\cap[ P_{j}\rangle_{m}$
if and only if $j\in \{i_{1},\dots,i_{m}\}$. Thus, by \eqref{eq6},
$T_{j}$ is contained in $f(S)$
only in the case when $j\in \{i_{1},\dots,i_{m}\}$.
It was noted above that $f(S)$ is the sum of some elements from ${\mathcal J}_{k-m+1}$;
hence
$$f(P_{i_{1}}+\dots +P_{i_{m}})=T_{i_{1}}+\dots +T_{i_{m}}$$
and we get the claim. \end{proof}

\begin{lemma}\label{lemma3}
The sum of any $2m$ elements from ${\mathcal J}_{k-m+1}$
is $(k+m)$-dimensional.
\end{lemma}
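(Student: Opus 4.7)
The plan is to leverage Lemma 2 together with the fact that $f$ is an isometric embedding, reducing the dimension computation to a statement about the distance between two carefully chosen vertices of ${\mathcal A}_m$.

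First I would fix any $2m$ distinct indices $i_1,\dots,i_{2m}\in\{1,\dots,l\}$ (such a choice is available because $2m\le l$, which follows from the standing assumption $m\le l-m$). Split them into two disjoint blocks of size $m$ and set
\[
S:=P_{i_1}+\dots+P_{i_m},\qquad U:=P_{i_{m+1}}+\dots+P_{i_{2m}}.
\]
Both $S$ and $U$ lie in ${\mathcal A}_m$, and since the base vectors defining the $P_{i_j}$ are linearly independent and the two index blocks are disjoint, we have $S\cap U=0$. Hence $d_m(S,U)=m-\dim(S\cap U)=m$.

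Next I would invoke the isometry property of $f$ to conclude $d_k(f(S),f(U))=m$, which rewrites as $\dim(f(S)+f(U))=2k-\dim(f(S)\cap f(U))=k+m$. By Lemma \ref{lemma2},
\[
f(S)=T_{i_1}+\dots+T_{i_m},\qquad f(U)=T_{i_{m+1}}+\dots+T_{i_{2m}},
\]
so $f(S)+f(U)=T_{i_1}+\dots+T_{i_{2m}}$. Therefore this sum has dimension exactly $k+m$, which is the claim.

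I do not expect a serious obstacle here: the lemma is essentially an immediate consequence of the previous identification of $f$ with the ``sum'' construction on the $T_j$'s. The only thing to be careful about is that one actually has two disjoint $m$-blocks of indices (granted by $2m\le l$) and that the resulting pair $(S,U)$ realises the diameter of $\Gamma({\mathcal A}_m)$, both of which are routine.
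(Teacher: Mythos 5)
Your proof is correct and follows essentially the same route as the paper: both arguments pick two disjoint $m$-blocks of indices, use the identification $f(P_{i_1}+\dots+P_{i_m})=T_{i_1}+\dots+T_{i_m}$ from Lemma \ref{lemma2}, and transfer the distance $m$ between the corresponding elements of ${\mathcal A}_m$ through the isometry $f$ to conclude that the sum of the $2m$ subspaces $T_{i_j}$ has dimension $k+m$. The only cosmetic difference is that the paper phrases the argument via $f^{-1}$ applied to sums of the $T_j$'s, while you apply $f$ to sums of the $P_{i_j}$'s.
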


\begin{proof}
Suppose that $i_{1},\dots,i_{2m}\in\{1,\dots, l\}$ are distinct
and consider
$$S:=T_{i_{1}}+\dots+T_{i_{m}}\;\mbox{ and }\;U:=T_{i_{m+1}}+\dots+T_{i_{2m}}.$$
The intersection of the subspaces
$$f^{-1}(S)=P_{i_{1}}+\dots+P_{i_{m}}\;\mbox{ and }\;f^{-1}(U):=P_{i_{m+1}}+\dots+P_{i_{2m}}$$
is zero. Hence
$d_{m}(f^{-1}(S),f^{-1}(U))=m$
and, by our hypothesis,
$d_{k}(S,U)=m$.
The latter means that
$S+U$ is $(k+m)$-dimensional.
\end{proof}

The set ${\mathcal J}_{k-m+1}$ is formed by $l$
mutually adjacent elements of ${\mathcal G}_{k-m+1}(V)$.
It is not contained in a top (if ${\mathcal J}_{k-m+1}$ is a subset of a top
then the sum of all elements from ${\mathcal J}_{k-m+1}$ is $(k-m+2)$-dimensional
which contradicts Lemma \ref{lemma3}).
Thus
$${\mathcal J}_{k-m+1}\subset [M\rangle_{k-m+1},\;\;M\in {\mathcal G}_{k-m}(V).$$
Let $N$ be the sum of all elements from ${\mathcal J}_{k-m+1}$.
Then
\begin{equation}\label{eq7}
k+m\le \dim N\le k-m+l
\end{equation}
(this follows from Lemma \ref{lemma3} and the fact that $|{\mathcal J}_{k-m+1}|=l$).
It is clear that $M\subset N$
and ${\mathcal J}$
is contained in $[M,N]_{k}$.

By Lemmas \ref{lemma2} and $\ref{lemma3}$, ${\mathcal X}={\mathcal J}_{k-m+1}$
is a $(2m)$-independent subset of $[M\rangle_{k-m+1}$ and ${\mathcal J}$ coincides
with ${\mathcal J}_{m}({\mathcal X})$, see Example \ref{exmp4}.

If $l=2m$ then, by \eqref{eq7}, the subspace $N$ is $(k+m)$-dimensional
and ${\mathcal J}_{k-m+1}$ is a base of the projective space $[M,N]_{k-m+1}$.
This implies that ${\mathcal J}$ is an apartment of the parabolic subspace $[M,N]_{k}$.

{\bf 4}.
Suppose that $f$ satisfies (B) and consider the mapping $g:{\mathcal A}_{m}\to {\mathcal J}^{0}$
which transfers every $S\in{\mathcal A}_{m}$ to the annihilator of $f(S)$.
This is an isometric embedding of $J(l,m)$ in $\Gamma_{n-k}(V^{*})$
satisfying (A), see Fact 1.

As in the previous step, we establish the existence of $M\in {\mathcal G}_{n-k-m}(V^{*})$
and a $(2m)$-independent subset ${\mathcal X}\subset [M\rangle_{n-k-m+1}$
such that ${\mathcal J}^{0}$ coincides with ${\mathcal J}_{m}({\mathcal X})$.
Then $M^{0}\in {\mathcal G}_{k+m}(V)$ and ${\mathcal X}^{0}$ is a $(2m)$-independent
subset of $\langle M^{0}]_{k+m-1}$.
It is clear that ${\mathcal J}$ coincides with ${\mathcal J}^{*}_{k}({\mathcal X}^{0})$,
see Example \ref{exmp5}.

If $l=2m$ then ${\mathcal J}^{0}$ is an apartment of the parabolic subspace
$[M,N]_{n-k}$, where $N\in {\mathcal G}_{n-k+m}(V^{*})$.
Thus ${\mathcal J}$ is an apartment of the parabolic subspace
$[N^{0}, M^{0}]_{k}$ and $N^{0}\in {\mathcal G}_{k-m}(V)$.

\section{Permutations on finite subsets of projective spaces induced by semi-linear automorphisms}

Every permutation on an independent subset of $\Pi_{V}$ can be extended to an element of ${\rm PGL}(V)$,
such an extension is not unique even if our subset is a base of $\Pi_{V}$.
Indeed, we take a base $\langle x_{1}\rangle,\dots, \langle x_{n}\rangle$ of $\Pi_{V}$ and
consider any linear automorphism $u:V\to V$ which transfers every $x_{i}$ to $a_{i}x_{i}$;
if at least two $a_{i}$ are distinct then $u$ induces a non-identity element of ${\rm PGL}(V)$ whose restriction
to the base is identity.

\begin{prop}
Every permutation on an $n$-simplex of $\Pi_{V}$
can be extended to an element of ${\rm PGL}(V)$;
it can be uniquely extended
if and only if the division ring $R$ is commutative.
\end{prop}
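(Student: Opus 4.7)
The proof will have two halves: constructing an extension for each permutation, and computing the pointwise stabilizer of the simplex inside $\mathrm{PGL}(V)$ to settle uniqueness.

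For existence, the idea is to exploit the canonical presentation of an $n$-simplex recalled just before the statement. First I would verify a short preliminary: for \emph{every} ordering $(Q_{1},\dots,Q_{n+1})$ of an $n$-simplex one can find representatives $y_{i}$ of $Q_{i}$ with $y_{1},\dots,y_{n}$ linearly independent and $Q_{n+1}=\langle y_{1}+\dots+y_{n}\rangle$. Indeed, picking arbitrary representatives for the first $n$ points gives a basis by $n$-independence; expanding a representative of $Q_{n+1}$ in this basis yields coefficients all of which must be nonzero (otherwise removing one basis vector and adjoining the representative of $Q_{n+1}$ would still leave a dependent set, contradicting $n$-independence), and rescaling by these coefficients absorbs them. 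Applying this both to the source ordering and to its image $(P_{\sigma(1)},\dots,P_{\sigma(n+1)})$, I obtain $x_{i}$ and $y_{i}$; the linear automorphism $u\in\mathrm{GL}(V)$ defined by $u(x_{i})=y_{i}$ for $i\le n$ automatically sends $x_{1}+\dots+x_{n}$ to $y_{1}+\dots+y_{n}$, so the induced projectivity realizes $\sigma$.

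For uniqueness it suffices to determine the pointwise stabilizer of the simplex in $\mathrm{PGL}(V)$. Any linear representative $v$ of a stabilizer element satisfies $v(x_{i})=b_{i}x_{i}$ with $b_{i}\in R^{\ast}$ and $v(x_{1}+\dots+x_{n})=c(x_{1}+\dots+x_{n})$ with $c\in R^{\ast}$. Expanding the second condition via additivity and comparing coordinates in the basis $x_{1},\dots,x_{n}$ forces $b_{1}=\dots=b_{n}=c$, so at the $\mathrm{GL}$ level the stabilizer consists of the linear maps $v_{c}$ determined by $v_{c}(x_{i})=cx_{i}$, parametrized by $c\in R^{\ast}$. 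The kernel of $\mathrm{GL}(V)\to\mathrm{PGL}(V)$ consists of the scalar homotheties, whose defining scalar is forced to be central by $R$-linearity; unwinding this gives $v_{c}\sim v_{c'}$ in $\mathrm{PGL}(V)$ iff $c^{-1}c'\in Z(R)$. Hence the stabilizer is isomorphic to $R^{\ast}/Z(R)^{\ast}$, and uniqueness of the extension holds precisely when this group is trivial, i.e., when $R$ is commutative.

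The subtle step is the final identification of the kernel. Over a non-commutative $R$ the linear map $v_{c}$ with $v_{c}(x_{i})=cx_{i}$ is \emph{not} the same as the set-theoretic map $x\mapsto cx$: the latter fails to be $R$-linear on all of $V$ unless $c$ is central, and only in that case do the two coincide and represent the identity of $\mathrm{PGL}(V)$. Keeping this distinction clear is what ensures that $v_{c}$ with non-central $c$ really gives a non-trivial element of $\mathrm{PGL}(V)$ fixing every point of the simplex, and this is precisely the point at which the commutativity hypothesis on $R$ enters, delivering both directions of the ``iff'' in the statement.
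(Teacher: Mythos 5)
Your argument is correct. Note, however, that the paper does not actually prove Proposition 2: it simply cites Propositions 1 and 2 of Section III.3 of Baer's book, so there is no internal proof to compare against; your write-up supplies the standard direct argument that the reference contains. Both halves check out. For existence, the normalization lemma (any ordering of the simplex admits representatives $y_{1},\dots,y_{n}$, a basis, with the last point spanned by $y_{1}+\dots+y_{n}$) is exactly right, and the coefficients in the expansion of the $(n+1)$-st representative are nonzero precisely by $n$-independence, so the linear map $x_{i}\mapsto y_{i}$ realizes the permutation. For uniqueness, reducing to the pointwise stabilizer and identifying it with the classes of the maps $v_{c}:\sum a_{i}x_{i}\mapsto\sum a_{i}cx_{i}$ is the key step, and your emphasis on the distinction between $v_{c}$ (always $R$-linear) and the set-theoretic left multiplication by $c$ (linear only for central $c$) is precisely where commutativity enters; $v_{c}$ induces the identity on $\Pi_{V}$ iff $c\in Z(R)$, so the stabilizer is trivial iff $R^{\ast}=Z(R)^{\ast}$. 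Two cosmetic remarks: the assignment $c\mapsto v_{c}$ is an anti-homomorphism ($v_{c}\circ v_{c'}=v_{c'c}$), so the stabilizer is more precisely a quotient of $(R^{\ast})^{\mathrm{op}}$, which is of course still isomorphic to $R^{\ast}/Z(R)^{\ast}$ and does not affect the triviality criterion; and the equivalence $v_{c}\sim v_{c'}$ comes out as $c'c^{-1}\in Z(R)$ rather than $c^{-1}c'\in Z(R)$, though these two conditions are in fact equivalent. Neither point affects the validity of the proof.
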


\begin{proof}
See Propositions 1 and 2 in Section III.3 \cite{Baer}.
\end{proof}

By Proposition 2, every permutation on an $m$-simplex, $m<n$ can be extended to an element of ${\rm PGL}(V)$;
such an extension is not unique, since our simplex spans a proper subspace of $\Pi_{V}$.

\begin{prop}
Let ${\mathcal X}\subset {\mathcal G}_{1}(V)$ be a finite subset
such that every permutation on ${\mathcal X}$ is induced by a semi-linear automorphism of $V$.
Then ${\mathcal X}$ is an independent subset  or a simplex.
\end{prop}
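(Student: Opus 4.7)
The plan is to assume $\mathcal{X}$ is not independent and derive that it must be a simplex. Since any semi-linear automorphism of $V$ inducing a permutation of $\mathcal{X}$ preserves $W:=\operatorname{span}(\mathcal{X})$, I restrict attention to semi-linear automorphisms of $W$; set $d=\dim W$, so $|\mathcal{X}|>d$. Pick a minimal dependent subset $\mathcal{S}\subset\mathcal{X}$. By minimality $\mathcal{S}$ is automatically an $m$-simplex of size $m+1$; label its elements so $\mathcal{S}=\{P_1,\dots,P_{m+1}\}$ with $\{P_1,\dots,P_m\}$ independent and choose representatives $P_i=\langle x_i\rangle$ with $P_{m+1}=\langle x_1+\cdots+x_m\rangle$ (possible by rescaling). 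Since $\operatorname{span}(P_1,\dots,P_m)\subseteq W$ has dimension $m$, we have $m\le d$.

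My first step is to show $\mathcal{X}\subseteq\operatorname{span}(x_1,\dots,x_m)$, which will force $m=d$. For any $Q\in\mathcal{X}\setminus\mathcal{S}$, take the semi-linear $u$ realizing the transposition $(P_1,Q)$ that fixes every other element of $\mathcal{X}$ pointwise. Fixing $P_2,\dots,P_m$ forces $u(x_i)=a_ix_i$ for $i\ge 2$, and fixing $P_{m+1}$ forces $u(x_1+\cdots+x_m)=b(x_1+\cdots+x_m)$; hence
\[
 u(x_1)=bx_1+\sum_{i\ge 2}(b-a_i)x_i\in\operatorname{span}(x_1,\dots,x_m),
\]
so $Q=\langle u(x_1)\rangle$ lies in the same subspace. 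Thus $\mathcal{X}\subseteq\operatorname{span}(x_1,\dots,x_m)$, giving $d\le m$. Together with $m\le d$ this yields $d=m$, and $\mathcal{S}$ is a $d$-simplex with $\{x_1,\dots,x_d\}$ a basis of $W$.

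The crux is ruling out any further $Q\in\mathcal{X}\setminus\mathcal{S}$. Writing $Q=\langle\sum c_ix_i\rangle$ and normalizing $c_1=1$, the transposition $(P_{d+1},Q)$ fixing the basis pointwise, realized by some $u$ with $u(x_i)=a_ix_i$, gives $a_i=\lambda c_i$ from $u(P_{d+1})=Q$; then $u(Q)=P_{d+1}$ yields a ring automorphism $\sigma$ with $\sigma(c_i)c_i$ independent of $i$, forcing $\sigma(c_i)=c_i^{-1}$ for all $i$. Further permutations---the transpositions $(P_i,P_j)$ with $i,j\le d$ fixing $P_{d+1}$ and $Q$, and $3$-cycles of the form $(P_i,P_j,P_{d+1})$ fixing $Q$---each yield an additional ring automorphism acting on the $c_k$'s in a prescribed way. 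Following the pattern of \cite{Baer}, Section III.3, composing and comparing these constraints forces the $c_i$ to collapse to a common value, so that $Q=P_{d+1}$, contradicting $Q\notin\mathcal{S}$. The principal obstacle is this last step: combining constraints from different semi-linear realizations (each carrying its own ring automorphism, which need not agree) requires delicate algebraic bookkeeping, particularly in the noncommutative setting, and establishing incompatibility of the accumulated system on $(c_1,\dots,c_d)$ is where the substantive work resides.
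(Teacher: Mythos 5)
Your reduction is sound and in places cleaner than the paper's: taking a minimal dependent subset ${\mathcal S}=\{P_{1},\dots,P_{m+1}\}$ and using the transposition $(P_{1},Q)$ to get $u(x_{1})=bx_{1}+\sum_{i\ge 2}(b-a_{i})x_{i}$, hence ${\mathcal X}\subset\langle x_{1},\dots,x_{m}\rangle$, is a correct way to pin down the span. (The paper instead starts from a \emph{maximal independent} subset, so spanning is automatic, and uses the transpositions $(P_{p},P_{q})$ to show that every further point has full support in the basis --- a fact you also need for your $Q$ before you may normalize $c_{1}=1$, since a priori some $c_{i}$ could vanish.) The genuine gap is the final and decisive step, which you yourself flag: after extracting the single relation ``$\sigma(c_{i})c_{i}$ is independent of $i$'' from the transposition $(P_{d+1},Q)$, you assert that composing and comparing the constraints coming from further transpositions and $3$-cycles ``forces the $c_{i}$ to collapse to a common value,'' citing the pattern of Baer. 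That collapse is precisely the substantive content of the proposition, and it is nowhere carried out; as written, the proposal is a plan for a proof rather than a proof.

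For comparison, the paper closes exactly this gap with two explicit computations. From the transposition $(P_{m+1},P_{m+2})$ (your $(P_{d+1},Q)$) it deduces $a_{1}^{2}=\dots=a_{m}^{2}$, hence $a_{i}=\pm a_{j}$, so after reordering and rescaling $x_{m+2}=a'(x_{1}+\dots+x_{p}-x_{p+1}-\dots-x_{m})$ with $1\le p<m$ (the case $p=m$ being excluded because $x_{m+1}$ and $x_{m+2}$ are independent). Then the transposition $(P_{1},P_{m+1})$ forces $u(x_{1})=c(x_{1}+\dots+x_{m})$ and $u(x_{i})=-cx_{i}$ for $i\ge 2$, whence $u(x_{m+2})$ is proportional to $x_{1}+2(x_{p+1}+\dots+x_{m})$, which does not lie in $P_{m+2}$ --- the contradiction. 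To complete your version you must supply an argument of exactly this concreteness. Note also that both your inference ``$\sigma(c_{i})c_{i}$ constant implies $\sigma(c_{i})=c_{i}^{-1}$'' and the paper's ``$a_{i}^{2}=a_{j}^{2}$ implies $a_{i}=\pm a_{j}$'' lean on commutativity and on the semi-linear twist $\sigma$ behaving tamely, so the ``delicate bookkeeping in the noncommutative setting'' you defer is not a side issue but part of what has to be settled.
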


\begin{cor}
Let ${\mathcal X}\subset {\mathcal G}_{1}(V)$ be a finite subset.
In the case when $R$ is commutative, the following conditions are equivalent:
\begin{enumerate}
\item[$\bullet$] every permutation on ${\mathcal X}$ can be uniquely extended to an element of ${\rm PGL}(V)$,
\item[$\bullet$] ${\mathcal X}$ is an $n$-simplex.
\end{enumerate}
\end{cor}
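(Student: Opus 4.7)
The plan is to combine Propositions 2 and 3 and then sharpen Proposition 3's dichotomy by constructing, in each remaining case, two distinct extensions of a single permutation on $\mathcal X$.

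The direction from $n$-simplex to unique extension is immediate: if $\mathcal X$ is an $n$-simplex and $R$ is commutative, Proposition 2 supplies both the existence of an extension of every permutation on $\mathcal X$ to an element of ${\rm PGL}(V)$ and its uniqueness. For the converse, the hypothesis includes in particular that every permutation on $\mathcal X$ extends to some element of ${\rm PGL}(V)$, so Proposition 3 applies and $\mathcal X$ is either an independent subset or an $m$-simplex for some $m\le n$. The task is then to eliminate every such possibility other than an $n$-simplex.

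I would split the work into two cases according to whether $\mathcal X$ spans $V$ or not. Case 1 is when $\mathcal X$ does not span $V$; this covers independent subsets of size less than $n$ and $m$-simplices with $m<n$. Here I would choose a basis $x_{1},\dots,x_{n}$ of $V$ whose initial segment spans $\mathcal X$, take $u_{1}$ to be the identity of $V$, and let $u_{2}$ fix every $x_{i}$ except one vector outside the span of $\mathcal X$, which it scales by some $a\in R^{*}$ with $a\ne 1$. Both $u_{1}$ and $u_{2}$ fix every line in $\mathcal X$, so each extends the identity permutation, yet $u_{2}=cu_{1}$ forces $c=1$ on the initial segment and $c=a$ on the scaled vector, a contradiction. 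Case 2 is when $\mathcal X$ is a base of $\Pi_{V}$ arising from a basis $x_{1},\dots,x_{n}$. The identity permutation on $\mathcal X$ is then extended by every diagonal automorphism $x_{i}\mapsto a_{i}x_{i}$ with $a_{i}\in R^{*}$, and two such diagonal automorphisms are scalar-equivalent exactly when all the coefficients $a_{i}$ agree; choosing $a_{1}\ne a_{2}$ produces two distinct elements of ${\rm PGL}(V)$ extending the same permutation.

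The step I expect to be the main obstacle is Case 2: one must be precise about when two linear automorphisms define the same element of ${\rm PGL}(V)$, and this is exactly the point where commutativity of $R$ enters the argument and where Proposition 3 alone (which does not distinguish bases from $n$-simplices) is insufficient and must be refined by a scalar-equivalence computation.
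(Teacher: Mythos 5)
Your proposal is correct and follows essentially the same route as the paper: the forward implication is Proposition 2, and the converse combines Proposition 3 with the two non-uniqueness observations the paper makes explicitly in Section 5 (the diagonal automorphisms $x_{i}\mapsto a_{i}x_{i}$ for a base, and the fact that a subset spanning a proper subspace admits a non-identity element of ${\rm PGL}(V)$ fixing it pointwise). Your Case 1 merely spells out the paper's one-line remark that an extension from an $m$-simplex with $m<n$ "is not unique, since our simplex spans a proper subspace of $\Pi_{V}$."
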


\begin{proof}
Let $P_{1},\dots, P_{n'}$ be the elements of ${\mathcal X}$.
For each $i\in\{1,\dots,n'\}$ we take a non-zero vector $x_{i}\in P_{i}$.
Suppose  that $\{P_{1},\dots,P_{m}\}$ is a maximal independent subset of ${\mathcal X}$.
If $m<n'$ then  the subset ${\mathcal X}$ is not independent.
In this case,
each $x_{j}$, $j>m$ is a linear combination of $x_{1},\dots,x_{m}$.
If this linear combination contains $x_{p}$ and does not contain $x_{q}$
for some $p,q\le m$ then
every semi-linear automorphism of $V$ inducing the transposition $(P_{p},P_{q})$
does not leave fixed $P_{j}$ which is impossible.
Therefore, for every $j>m$ we have
$$x_{j}=a_{j1}x_{1}+\dots+a_{jm}x_{m},$$
where each $a_{ji}$ is non-zero.
Then $P_{1},\dots,P_{m+1}$ form an $m$-simplex and we can assume that
$$x_{m+1}=x_{1}+\dots+x_{m};$$
in particular, ${\mathcal X}$ is an $m$-simplex if $n'=m+1$.
So, we need to show that the inequality $n'\ge m+2$ is impossible.

Suppose that $n'\ge m+2$ and
$$x_{m+2}=a_{1}x_{1}+\dots+ a_{m}x_{m}.$$
Consider any semi-linear automorphism $u:V\to V$ which induces the transposition $(P_{m+1},P_{m+2})$.
Then $u(x_{m+1})=ax_{m+2}$ for a non-zero scalar $a\in R$
and $s:=a^{-1}u$ is a semi-linear  automorphism of $V$
transferring $x_{m+1}$ to $x_{m+2}$.
Since $s(P_{i})=P_{i}$ if $i\le m$, we have
$$s(x_{i})=a_{i}x_{i},\;\;i\le m\;\mbox{ and }\;s(x_{m+2})=a^{2}_{1}x_{1}+\dots+a^{2}_{m}x_{m}\in P_{m+1}.$$
This implies the existence of a non-zero scalar $b\in R$ such that
$$a^{2}_{1}=\dots=a^{2}_{m}=b.$$
Thus for any $i,j\le m$ we get $a_{i}=\pm a_{j}$.
Since $x_{m+1}$ and $x_{m+2}$ are linearly independent, the equality $a_{i}=a_{j}$
does not hold for all pairs $i,j$.
Therefore, up to a permutation on $\{1,\dots, m\}$, we have
\begin{equation}\label{eq9}
x_{m+2}=a'(x_{1}+\dots+x_{p}-x_{p+1}-\dots-x_{m})
\end{equation}
with $1\le p<m$ and non-zero $a'\in R$.

Now suppose that $u$ is a semi-linear automorphism of $V$ which induces
the transposition $(P_{1},P_{m+1})$.
Then
$$u(x_{1})=c(x_{1}+\dots+x_{m})$$
and
$$u(x_{1}+\dots+x_{m})=c(x_{1}+\dots+x_{m})+c_{2}x_{2}+\dots+c_{m}x_{m}\in
P_{1}.$$
This implies that $c_{i}=-c$ and $u(x_{i})=-cx_{i}$ for all $i\in \{2,\dots,m\}$.
By \eqref{eq9},
$$u(x_{m+2})=
a''u(x_{1}+\dots+x_{p}-x_{p+1}-\dots-x_{m})=$$
$$a''(c(x_{1}+\dots+x_{m})-cx_{2}-\dots-cx_{p}+cx_{p+1}+\dots+cx_{m})=$$
$$a''c(x_{1}+2(x_{p+1}+\dots+x_{m}))$$
does not belong to $P_{m+2}$,
a contradiction.
\end{proof}

Every bijective transformation $f$ of a subset ${\mathcal X}\subset {\mathcal G}_{k}(V)$
defines a bijective transformation of ${\mathcal X}^{0}$ which maps every $S\in {\mathcal X}^{0}$
to $f(S^{0})^{0}$. The following statement is trivial.

\begin{lemma}\label{lemma4}
A bijective transformation of ${\mathcal X}\subset {\mathcal G}_{k}(V)$
is induced by a semi-linear automorphism $u:V\to V$ if and only if
the corresponding transformation of ${\mathcal X}^{0}$ is induced by the contragradient of $u$.
\end{lemma}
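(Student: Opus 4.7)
The plan is to unwind the definitions and apply the key property of the contragradient stated earlier in the paper, namely that $\check{u}(S^{0})=u(S)^{0}$ for every subspace $S\subset V$.

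First, I would fix notation. Let $f:{\mathcal X}\to {\mathcal X}$ be a bijection, and let $\tilde{f}:{\mathcal X}^{0}\to{\mathcal X}^{0}$ be the induced bijection, so that $\tilde{f}(T)=f(T^{0})^{0}$ for all $T\in{\mathcal X}^{0}$. Under the canonical identification $V^{**}\cong V$, the double annihilator is the identity, so in particular every $T\in{\mathcal X}^{0}$ has the form $T=S^{0}$ for a unique $S\in{\mathcal X}$, and $T^{0}=S$.

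For the forward direction, suppose $f$ is induced by a semi-linear automorphism $u:V\to V$, i.e.\ $f(S)=u(S)$ for every $S\in{\mathcal X}$. Then for $T=S^{0}\in{\mathcal X}^{0}$, I compute
\[
\tilde{f}(T)=f(T^{0})^{0}=f(S)^{0}=u(S)^{0}=\check{u}(S^{0})=\check{u}(T),
\]
where the penultimate equality uses exactly the defining property of $\check{u}$ recalled in Section 2. Thus $\tilde{f}$ is induced by $\check{u}$ on ${\mathcal X}^{0}$.

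For the converse, I would invoke symmetry. Under the canonical identifications $V^{**}\cong V$ and $({\mathcal X}^{0})^{0}={\mathcal X}$, the induced transformation of $({\mathcal X}^{0})^{0}$ associated with $\tilde{f}$ is $f$ itself; moreover $\check{\check{u}}=u$ (since $\check{u}=(u^{*})^{-1}$ implies $(\check{u}^{*})^{-1}=u$). Applying the forward direction to $\tilde{f}$ and $\check{u}$ in place of $f$ and $u$ therefore yields that $f$ is induced by $u$. There is no real obstacle here — the statement is essentially a bookkeeping observation about how the annihilator functor intertwines $u$ with $\check{u}$, which is why it is called trivial.
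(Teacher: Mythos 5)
Your argument is correct and is exactly the routine unwinding the paper has in mind when it states the lemma is trivial and omits the proof: the forward direction is the defining property $\check{u}(S^{0})=u(S)^{0}$ of the contragredient recalled in Section 2, and the converse follows from the same computation read through the canonical identifications $V^{**}\cong V$ and $({\mathcal X}^{0})^{0}={\mathcal X}$, under which $\check{\check{u}}=u$. There are no gaps.
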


This lemma implies that the direct analogs of Propositions 2 and 3 hold for the dual projective space $\Pi^{*}_{V}$.

\section{Rigid embeddings}
A $J$-subset ${\mathcal J}\subset {\mathcal G}_{k}(V)$
is called {\it rigid} if
every automorphism of $\Gamma({\mathcal J})$ can be extended to an automorphism of $\Gamma_{k}(V)$,
in other words, ${\mathcal J}$ is the image of a rigid isometric embedding of a Johnson graph in $\Gamma_{k}(V)$.
In this section all rigid $J$-subsets will be classified.

The following observation is trivial.

\begin{fact}
Let $J(l,m)$ be the Johnson graph whose vertices are
the $m$-element subsets of an $l$-element set $I$.
If $l\ne 2m$ then every automorphism of $J(l,m)$
is induced by a permutation on the set $I$.
In the case when $l=2m$,
the group of all automorphisms of $J(l,m)$ is spanned by
the automorphisms induced by permutations on $I$
and the automorphism transferring each $m$-element subset of $I$ to its complement.
\end{fact}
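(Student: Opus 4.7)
The plan is to induct on $m$ using the maximal-clique structure of $J(l,m)$. For $1 < m < l-1$ the maximal cliques come in two families: \emph{stars} $S_A = \{S : A \subset S\}$ of size $l-m+1$ (indexed by $(m-1)$-subsets $A \subset I$) and \emph{tops} $T_B = \{S : S \subset B\}$ of size $m+1$ (indexed by $(m+1)$-subsets $B \subset I$). The base cases $m = 1$ and $m = l-1$ are immediate, since $J(l,m) \cong K_l$ has automorphism group $S_l$.

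Assume $1 < m < l-1$ and $l \ne 2m$. Then stars and tops have distinct sizes, so any automorphism $\phi$ of $J(l,m)$ preserves each family. The action on stars induces a bijection $\phi_{m-1}$ on the $(m-1)$-subsets, and the identity $S_{A_1} \cap S_{A_2} \ne \emptyset \iff |A_1 \cap A_2| = m-2$ shows that $\phi_{m-1}$ is an automorphism of $J(l, m-1)$; symmetrically, the action on tops yields an automorphism $\phi_{m+1}$ of $J(l, m+1)$. I would arrange the induction to step toward the nearer endpoint of $\{1, l-1\}$ (descend via $\phi_{m-1}$ when $m < l/2$, ascend via $\phi_{m+1}$ when $m > l/2$), so the self-dual level $l/2$ is never visited. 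Induction then supplies a permutation $\pi$ of $I$, and one recovers $\phi = \pi$ on $m$-subsets from the identity $S = \bigcup_{A \subset S,\, |A|=m-1} A$, which forces $\pi(S) \subseteq \phi(S)$ and hence equality by cardinality.

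In the self-dual case $l = 2m$, the two families have the same size, and $\phi$ may swap them. The complementation map $c : S \mapsto I \setminus S$ is an automorphism of $J(l,m)$ (cardinalities of intersections are preserved under complementation) and satisfies $c(S_A) = T_{I \setminus A}$, so $c$ swaps the two families. Replacing $\phi$ by $c \circ \phi$ if necessary, we may assume $\phi$ preserves each family; the previous argument then applies via reduction to $J(l, m-1)$, where $l = 2m \ne 2(m-1)$, so the inductive hypothesis directly yields a permutation. Thus every automorphism of $J(2m, m)$ is induced either by a permutation of $I$ or by a permutation composed with $c$.

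The one step that genuinely requires care is coordinating the direction of induction with the position of $m$ relative to $l/2$: a naive unidirectional descent would eventually hit an intermediate self-dual level $l = 2m'$, at which the inductive hypothesis leaves open the possibility that $\phi_{m'}$ is complementation. This alternative can be ruled out by a dimension count (if $\phi(S_A) = S_{I \setminus A}$ for each $(m-1)$-subset $A \subset S$, then taking $A = S \setminus \{s\}$ and unioning the resulting inclusions $(I \setminus S) \cup \{s\} \subseteq \phi(S)$ over $s \in S$ would force $I \subseteq \phi(S)$, contradicting $|\phi(S)| = m < l$), but routing the induction toward the nearer endpoint of $\{1, l-1\}$ avoids the issue altogether.
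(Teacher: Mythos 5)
The paper offers no proof of this Fact---it is introduced with ``The following observation is trivial''---so your argument supplies something the author omits rather than paralleling an existing proof. Your clique-based induction is sound, and it is essentially the same technique the paper deploys in Section 4 for Theorem 4: two families of maximal cliques (stars of size $l-m+1$, tops of size $m+1$), type preservation, an induced automorphism one level down, and recovery of $\phi$ from the induced map. Routing the induction toward the nearer endpoint of $\{1,l-1\}$ so as never to pass through an intermediate self-dual level is exactly the right precaution, and the recovery step $\pi(S)\subseteq\phi(S)$ followed by a cardinality count is clean. One step is asserted rather than proved: when $l=2m$ you pass from ``the two clique families have the same size'' to ``$\phi$ either preserves each family or swaps them,'' but a priori $\phi$ could send some stars to stars and other stars to tops. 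To exclude this you need the observation that two distinct maximal cliques meet in exactly two vertices precisely when one is a star $S_A$ and the other a top $T_B$ with $A\subset B$, together with the connectedness of the graph on maximal cliques whose edges are such pairs; that forces the star/top decision to be global. (This is precisely the argument the paper does write out, in Step 1 of the proof of Theorem 4, for the Grassmann graph.) With that sentence added, your proof is complete.
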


Let us consider several examples.

\begin{exmp}{\rm
Let ${\mathcal A}$ be the apartment of ${\mathcal G}_{k}(V)$ defined by a base
$B=\{x_{i}\}^{n}_{i=1}$ of $V$.
If $n\ne 2k$ then every automorphism of $\Gamma({\mathcal A})$
is induced by a permutation on the set
$$\{P_{1}:=\langle x_{1}\rangle,\dots,P_{n}:=\langle x_{n}\rangle\};$$
thus it can be extended (not uniquely) to an element of ${\rm PGL}(V)$.
Suppose that $n=2k$.
In this case, the automorphism of $\Gamma({\mathcal A})$ transferring
every element of ${\mathcal A}$ to its complement
can not be extended to the automorphism of $\Gamma_{k}(V)$ induced by a semi-linear automorphism of $V$.
Assume that $R$ and $R^{*}$ are isomorphic; this guarantees that semi-linear isomorphisms of $V$ to $V^{*}$
exist.
Let $\{x^{*}_{i}\}^{n}_{i=1}$ be the base of $V^{*}$ dual to $B$; i.e.
$x^{*}_{i}x_{j}=\delta_{ij}$ ($\delta_{ij}$ is Kronecker symbol).
Consider any semi-linear isomorphism $s:V\to V^{*}$ which sends every $x_{i}$ to $x^{*}_{i}$
and denote by $g$ the associated automorphism of $\Gamma_{k}(V)$
(it maps every $S\in {\mathcal G}_{k}(V)$ to the annihilator of $s(S)$).
Then
$$g(P_{i_{1}}+\dots+P_{i_{k}})=P_{j_{1}}+\dots+P_{j_{k}},\;\;\;
\{j_{1},\dots,j_{k}\}=\{1,\dots,n\}\setminus \{i_{1},\dots,i_{k}\};$$
in other words, the restriction of $g$ to ${\mathcal A}$ is the automorphism
of $\Gamma({\mathcal A})$ sending every element of ${\mathcal A}$ to its complement.
Therefore, apartments of ${\mathcal G}_{k}(V)$ are rigid (under assumption that $R$ and $R^{*}$ are isomorphic if $n=2k$).
}\end{exmp}

\begin{exmp}{\rm
Let ${\mathcal A}$ be an apartment of a parabolic subspace
$$[M,N]_{k},\;\;M\in{\mathcal G}_{k-m}(V),\;\;N\in {\mathcal G}_{l+k-m}(V).$$
If $l\ne 2m$ then every automorphism of $\Gamma({\mathcal A})$
can be extended (not uniquely) to an element of ${\rm PGL}(V)$.
Suppose that $l=2m$.
If $n\ne 2k$ then the automorphism of $\Gamma({\mathcal A})$ transferring
every element of ${\mathcal A}$ to its complement can not be extended to an
automorphism of $\Gamma_{k}(V)$.
In the case when $n=2k$, the codimension of $N$ is equal to the dimension of $M$
and such an extension is possible if $R$ and $R^{*}$ are isomorphic.
}\end{exmp}

Now consider a $J$-subset ${\mathcal J}\subset {\mathcal G}_{k}(V)$
distinct from an apartment of ${\mathcal G}_{k}(V)$ and an apartment of a parabolic subspace of ${\mathfrak G}_{k}(V)$.
Then $\Gamma({\mathcal J})$ is isomorphic to $J(l,m)$ with $l\ne 2m$.
We assume that $m<l-m$.
By Theorem 4, one of the following possibilities is realized:
\begin{enumerate}
\item[$\bullet$]
${\mathcal J}={\mathcal J}_{m}({\mathcal X})$, where
${\mathcal X}$ is a finite $(2m)$-independent subset of $[M\rangle_{k-m+1}$
and $M\in {\mathcal G}_{k-m}(V)$,
\item[$\bullet$]
${\mathcal J}={\mathcal J}^{*}_{k}({\mathcal X})$, where
${\mathcal X}$ is a finite $(2m)$-independent subset of $\langle N]_{k+m-1}$
and $N\in {\mathcal G}_{k+m}(V)$.
\end{enumerate}
By our hypothesis, ${\mathcal X}$ is not independent.
Since $\Gamma({\mathcal J})$ is isomorphic to $J(l,m)$ and $l\ne 2m$,
every automorphism of $\Gamma({\mathcal J})$ is induced by a permutation on ${\mathcal X}$.
It follows from the results of the previous section that every automorphism of $\Gamma({\mathcal J})$
can be extended to an automorphism of $\Gamma_{k}(V)$ if and only if ${\mathcal X}$ is a simplex.
Also,
every automorphism of $\Gamma({\mathcal J})$ can be uniquely extended to an element of ${\rm PGL}(V)$
only in the case when $R$ is commutative and
${\mathcal X}$ is an $n$-simplex of $\Pi_{V}$ (then $2k\le n$) or
an $n$-simplex of $\Pi^{*}_{V}$ (then $2k\ge n$).

So, we get the following.

\begin{theorem}
Let $n,k,l,m$ and $m'$ be as in Theorem 4.
Let also ${\mathcal J}$ be the image of a rigid isometric embedding of $J(l,m)$ in $\Gamma_{k}(V)$.

If $l\ne 2m$ then one of the following possibilities is realized:
\begin{enumerate}
\item[$\bullet$]
$l\le n-k+m'+1$ and there exists $M\in {\mathcal G}_{k-m'}(V)$ such that ${\mathcal J}$ is
the set of all $(m'-1)$-faces of an $(l-1)$-simplex in the projective space $[M\rangle_{k-m'+1}$
{\rm (}Example \ref{exmp4}{\rm)},
\item[$\bullet$]
$l\le k+m'+1$ and there exists $N\in {\mathcal G}_{k+m'}(V)$ such that ${\mathcal J}$ is
the set of all $(m'-1)$-faces of an $(l-1)$-simplex in the projective space $\langle N]_{k+m'-1}$
{\rm (}Example \ref{exmp5}{\rm)},
\item[$\bullet$]
$l\le n-k+m'$ and there exist $M\in {\mathcal G}_{k-m'}(V)$ and $N\in {\mathcal G}_{l+k-m'}(V)$
such that ${\mathcal J}$ is an apartment of the parabolic subspace $[M,N]_{k}$,
\item[$\bullet$]
$l\le k+m'$ and there exist $M\in {\mathcal G}_{k+m'-l}(V)$ and $N\in {\mathcal G}_{k+m'}(V)$ such that ${\mathcal J}$ is
an apartment of the parabolic subspace $[M,N]_{k}$.
\end{enumerate}
Therefore, if $l>\max\{k,n-k\}+m'+1$ then there are no rigid isometric embeddings of $J(l,m)$ in $\Gamma_{k}(V)$.

If $l=2m$ then ${\mathcal J}$ is an apartment of a parabolic subspace
$$[M,N]_{k},\;\;M\in{\mathcal G}_{k-m}(V),\;\;N\in{\mathcal G}_{k+m}(V),$$
$n=2k$ and $R$ is isomorphic to $R^{*}$.

If every automorphism of the graph $\Gamma({\mathcal J})$ can be uniquely extended  to an element of ${\rm PGL}(V)$
then $R$ is commutative and one of the following possibilities is realized:
\begin{enumerate}
\item[$\bullet$]
$l=n+1$, $m'=k$, $2k\le n$ and ${\mathcal J}$ is the set of all $(k-1)$-faces of an $n$-simplex in $\Pi_{V}$
{\rm (}Example \ref{exmp2}{\rm)},
\item[$\bullet$]
$l=n+1$, $m'=n-k$, $2k\ge n$ and ${\mathcal J}$ is the set of all $(n-k-1)$-faces of an $n$-simplex in $\Pi^{*}_{V}$
{\rm (}Example \ref{exmp3}{\rm)}.
\end{enumerate} \end{theorem}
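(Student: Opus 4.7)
The plan is to combine Theorem~4 (which classifies the shapes of ${\mathcal J}$), Fact~2, Chow's theorem (Theorem~3), and Proposition~3/Corollary~1. I split into the regimes $l\ne 2m$ and $l=2m$, taking $m\le l-m$ without loss, so $m'=m$.

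Suppose first $l>2m$. By Fact~2 every automorphism of $\Gamma({\mathcal J})\cong J(l,m)$ is induced by a permutation of the underlying $l$-set; by Theorem~4 the subset ${\mathcal J}$ is ${\mathcal J}_m({\mathcal X})$ for some $(2m)$-independent ${\mathcal X}\subset[M\rangle_{k-m+1}$ with $M\in{\mathcal G}_{k-m'}(V)$, or the dual form ${\mathcal J}^*_k({\mathcal X})$ with ${\mathcal X}\subset\langle N]_{k+m-1}$; in either case the $l$ labels correspond bijectively to the $l$ elements of ${\mathcal X}$. By Chow's theorem, any extension of such a permutation-automorphism to $\Gamma_k(V)$ is induced by a semi-linear automorphism of $V$ or, only when $n=2k$, by a semi-linear isomorphism $V\to V^*$; the latter interchanges the stars and the tops of $\Gamma_k(V)$ and so cannot realize an automorphism of $\Gamma({\mathcal J})$ that preserves the star/top type of each maximal clique, as is the case here since $l\ne 2m$ leaves no complementation in $\mathrm{Aut}\,J(l,m)$. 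Thus every permutation on ${\mathcal X}$ is induced by a semi-linear automorphism of $V$; since $l>2m$, the subspace $M$ is recovered canonically as $T_1\cap T_2$ for any $T_1,T_2\in{\mathcal J}$ arising from disjoint $m$-subsets of ${\mathcal X}$, so this semi-linear automorphism must preserve $M$ and descend to a semi-linear automorphism of $V/M$ inducing the same permutation on ${\mathcal X}\subset[M\rangle_{k-m+1}\cong\Pi_{V/M}$. Proposition~3 now forces ${\mathcal X}$ to be independent or a simplex in $\Pi_{V/M}$; the independent case (with $N=\sum{\mathcal X}$) gives the apartment of bullet~3, and the simplex case gives bullet~1, the bounds $l\le n-k+m'$ and $l\le n-k+m'+1$ coming from $\dim\Pi_{V/M}=n-k+m-1$. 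Bullets~2 and~4 follow by repeating the argument for ${\mathcal J}^0\subset{\mathcal G}_{n-k}(V^*)$ via Fact~1 and Lemma~4.

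Next suppose $l=2m$. By Theorem~4, ${\mathcal J}$ is already an apartment of a parabolic subspace $[M,N]_k$ with $M\in{\mathcal G}_{k-m}(V)$ and $N\in{\mathcal G}_{k+m}(V)$. The novelty is the complementation automorphism in $\mathrm{Aut}\,J(2m,m)$ (Fact~2), which swaps the stars and tops of ${\mathcal J}$; by Chow's theorem its extension can only come from a semi-linear isomorphism $V\to V^*$. This forces $n=2k$ and, by Section~2, requires $R\cong R^*$. Examples~6 and~7 verify sufficiency of these two conditions.

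For the uniqueness clause I would apply Corollary~1: if $R$ is commutative, a finite subset ${\mathcal X}\subset{\mathcal G}_1(V)$ admits a unique lift of every permutation to ${\rm PGL}(V)$ precisely when ${\mathcal X}$ is an $n$-simplex of $\Pi_V$. Plugging this into the reduction above (and its dual), the $n$-simplex requirement forces $|{\mathcal X}|=l=n+1$ with ambient space equal to all of $\Pi_V$ (so $M=0$, which is Example~2 with $m'=k$ and $2k\le n$) or all of $\Pi^*_V$ (so $N=V$, Example~3 with $m'=n-k$ and $2k\ge n$); any other configuration leaves a positive-dimensional complement in $V/M$ or beyond $N$, giving positive-dimensional freedom in the semi-linear lift and killing uniqueness. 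The main obstacle I anticipate is the reduction step in the $l\ne 2m$ case, namely verifying cleanly that Chow-extensions realizing a given permutation on ${\mathcal X}$ must preserve $M$ and descend to $V/M$ so that Proposition~3 applies unchanged; everything after that is dimensional bookkeeping.
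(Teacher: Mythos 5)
Your proposal is correct and follows essentially the same route as the paper: Theorem 4 plus Fact 2 reduce everything to permutations on the set ${\mathcal X}$, Chow's theorem and the star/top type analysis separate the $l\ne 2m$ and $l=2m$ cases, and Proposition 3 (with Proposition 2, Corollary 1 and the duality of Lemma 4) forces ${\mathcal X}$ to be independent or a simplex, yielding the four bullets and the uniqueness clause. The only difference is cosmetic: you make explicit the reduction to $V/M$ (recovering $M$ as $T_1\cap T_2$) and the type-preservation argument ruling out extensions via $V\to V^{*}$ when $l\ne 2m$, steps the paper leaves implicit.
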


\begin{cor}
Let ${\mathcal J}$ be the image of a rigid isometric embedding of $J(n,k)$ in $\Gamma_{k}(V)$
distinct from an apartment of ${\mathcal G}_{k}(V)$.
If $2k<n$ then one of the following possibilities is realized:
\begin{enumerate}
\item[$\bullet$]
${\mathcal J}$ is the set of all $(k-1)$-faces of an $(n-1)$-simplex in $\Pi_{V}$ {\rm (}Example \ref{exmp2}{\rm)},
\item[$\bullet$]
$n=2k+1$ and there exists $N\in {\mathcal G}_{n-1}(V)$ such that
${\mathcal J}$ is the set of all $(k-1)$-faces of an $(n-1)$-simplex in the projective space $\langle N]_{n-2}$ {\rm (}Example \ref{exmp5}{\rm)}.
\end{enumerate}
In the case when $2k>n$, we have the following two possibilities:
\begin{enumerate}
\item[$\bullet$]
${\mathcal J}$ is the set of all $(n-k-1)$-faces of an $(n-1)$-simplex in $\Pi^{*}_{V}$ {\rm (}Example \ref{exmp3}{\rm)},
\item[$\bullet$]
$n=2k-1$ and there exists $M\in {\mathcal G}_{1}(V)$ such that ${\mathcal J}$ is
the set of all $(k-2)$-faces of an $(n-1)$-simplex in the projective space $[M\rangle_{2}$ {\rm (}Example \ref{exmp4}{\rm)}.
\end{enumerate}
\end{cor}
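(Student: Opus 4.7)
The plan is to deduce Corollary 2 from Theorem 5 by specializing $l=n$ and $m=k$. Under Fact 2, every automorphism of $J(n,k)$ (with $n\ne 2k$, the only case considered in the corollary) is induced by a permutation on the index set, so the rigidity hypothesis is exactly the rigidity hypothesis of Theorem 5; thus Theorem 5 applies and gives one of four shapes for $\mathcal{J}$. What remains is to work out, in each of the two regimes $2k<n$ and $2k>n$, which of those shapes is compatible with the dimension relations $l=n$ and $m'=\min\{k,n-k\}$, and which of them collapses to an apartment of $\mathcal{G}_k(V)$ (and is therefore excluded by hypothesis).

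I would begin with the regime $2k<n$, so $m'=k$. The four Theorem~5 options then specialize as follows. The first option gives $l\le n-k+m'+1=n+1$ (automatic) with $M\in\mathcal{G}_{k-m'}(V)=\mathcal{G}_0(V)=\{0\}$, so $[M\rangle_{k-m'+1}=\Pi_V$, producing the first bullet. The second option requires $l\le k+m'+1=2k+1$, i.e.\ $n\le 2k+1$, and combined with $n>2k$ forces $n=2k+1$; then $N\in\mathcal{G}_{2k}(V)=\mathcal{G}_{n-1}(V)$ and $\langle N]_{k+m'-1}=\langle N]_{n-2}$, giving the second bullet. The third option with $M=0$ and $N\in\mathcal{G}_n(V)=\{V\}$ yields an apartment of $[0,V]_k=\mathcal{G}_k(V)$, which is excluded. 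The fourth option requires $l\le k+m'=2k$, contradicting $n>2k$. This exhausts the case $2k<n$.

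The regime $2k>n$ is handled symmetrically via Fact~1, or directly with $m'=n-k$. The first Theorem~5 option requires $l\le 2(n-k)+1$, forcing $n=2k-1$, and gives $M\in\mathcal{G}_{2k-n}(V)=\mathcal{G}_1(V)$ with $[M\rangle_{k-m'+1}=[M\rangle_2$; the exponent $m'-1=k-2$ identifies this with the second bullet of the second list. The second option has $N\in\mathcal{G}_n(V)=\{V\}$ and $\langle V]_{n-1}=\Pi^*_V$, yielding the first bullet of the second list. The third and fourth options either violate $n<2k$ or collapse (via $M=0$, $N=V$) to an apartment of $\mathcal{G}_k(V)$, hence are excluded.

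I do not expect any serious obstacle: the content is a bookkeeping exercise with the index parameters of Theorem~5, and the only point that demands a little care is verifying that the "apartment of a parabolic subspace" cases actually degenerate to apartments of $\mathcal{G}_k(V)$ (so that the hypothesis ``distinct from an apartment of $\mathcal{G}_k(V)$'' eliminates them), which follows immediately from $\mathcal{G}_0(V)=\{0\}$ and $\mathcal{G}_n(V)=\{V\}$ together with the dimension identities $k-m'=0$, $l+k-m'=n$ (when $2k<n$) and $k+m'-l=0$, $k+m'=n$ (when $2k>n$).
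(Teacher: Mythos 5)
Your proposal is correct and is exactly the intended derivation: the paper gives no separate proof of this corollary, which is meant to follow from Theorem 5 by setting $l=n$, $m=k$ and checking which of the four cases survive the constraints $2k<n$ or $2k>n$ and the exclusion of apartments of ${\mathcal G}_{k}(V)$. Your bookkeeping of the parameter specializations (including the degeneration of the parabolic-apartment cases via ${\mathcal G}_{0}(V)=\{0\}$ and ${\mathcal G}_{n}(V)=\{V\}$) is accurate.
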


\subsection*{Acknowledgement}
I express my deep gratitude to the first referee for attracting my attention to rigid embeddings
and to the second referee for useful information concerning an example in \cite{CS}.

\end{document}